\author{Philippe Chartier\footnote{%
Inria Rennes, IRMAR and ENS Rennes, Campus de Beaulieu, F-35170 Bruz, France. Philippe.Chartier@inria.fr
} , 
Mohammed Lemou\footnote{%
CNRS, IRMAR and ENS Rennes, Campus de Beaulieu, F-35170 Bruz, France. 
Mohammed.Lemou@univ-rennes1.fr
} , 
Léopold Trémant\footnote{%
Inria Rennes and IRMAR, Campus de Beaulieu, 35049 Rennes, France. 
Leopold.Tremant@inria.fr
}%
}
\date{}
\title{Uniformly accurate numerical schemes for a class of dissipative systems}
\newcommand{\eps}{\varepsilon}
\newcommand{\N}{\mathbb{N}}
\newcommand{\R}{\mathbb{R}}
\newcommand{\C}{\mathbb{C}}
\newcommand{\bigO}{\mathcal{O}}
\newcommand{\ol}[1]{\overline{#1}}
\newcommand{\eeps}{^{\eps}}
\newcommand{\inveps}{\frac{1}{\eps}}
\newcommand{\dpt}{\partial_t}
\newcommand{\dptau}{\partial_{\tau}}
\newcommand{\dptheta}{\partial_{\theta}}
\newcommand{\dpu}{\partial_{u}}
\newcommand{\dpx}{\partial_{x}}
\newcommand{\Dt}{\Delta t}
\newcommand{\D}{\mathrm{d}}
\newcommand{\id}{\mathrm{id}}
\newcommand{\among}[2]{\begin{pmatrix} #1 \\ #2 \end{pmatrix}}
\newcommand{\vertiii}[1]{{\left\vert\kern-0.25ex\left\vert\kern-0.25ex\left\vert #1 %
    \right\vert\kern-0.25ex\right\vert\kern-0.25ex\right\vert}}
\newtheorem{zzzzzz}{ }[section]
\newtheorem{theorem}[zzzzzz]{Theorem}
\newtheorem{proposition}[zzzzzz]{Proposition}
\newtheorem{remark}[zzzzzz]{Remark}
\newtheorem{lemma}[zzzzzz]{Lemma}
\newtheorem{assumption}[zzzzzz]{Assumption}
\newtheorem{property}[zzzzzz]{Property}
\numberwithin{equation}{section}
\begin{document}

\maketitle
\renewcommand{\tilde}{\widetilde}

\begin{abstract}
We consider a class of relaxation problems mixing slow and fast variations 
which can describe population dynamics models or hyperbolic systems, 
with varying stiffness (from non-stiff to strongly dissipative), 
and develop a multi-scale method by decomposing this problem into a micro-macro system 
where the original stiffness is broken. 
We show that this new problem can therefore be simulated 
with a \textit{uniform} order of accuracy using standard explicit numerical schemes. 
In other words, it is possible to solve the micro-macro problem 
with a cost independent of the stiffness (a.k.a. uniform cost), such that the error is also uniform. 
This method is successfully applied to two hyperbolic systems 
with and without non-linearities, 
and is shown to circumvent the phenomenon of order reduction. 
\\

\noindent\textit{AMS subject classification (2020): }
65L04, 34E13, 65L05, 65L20 
\\

\noindent\textit{Keywords: }
dissipative problem, multi-scale, micro-macro decomposition, uniform accuracy  
\end{abstract}

\section{Introduction}

We are interested in problems of the form, for $x\eeps(t) \in \R^{d_x}$ and $z\eeps(t) \in \R^{d_z}$, 
\begin{equation} \label{intro-eq:xz_pb}
\left\{
\begin{array}{ll} \displaystyle
\dot{x}\eeps = a(x\eeps,z\eeps), & x\eeps(0) = x_0 , 
\\ \displaystyle
\dot{z}\eeps = -\inveps A z\eeps + b(x\eeps, z\eeps), \quad & z\eeps(0) = z_0 , 
\end{array} \right.
\end{equation}
with $\eps \in (0,1]$ a small parameter, 
$A$ a diagonal positive matrix with integer coefficients, 
and where $a,b$ are respectively the $x$-component and the $z$-component of an analytic map $f$ which smoothly depends on $\eps$. 
In the sequel we shall more often write this problem as 
\begin{equation} \label{intro-pb:full_pb_on_u}
\dot{u}\eeps = -\inveps \Lambda u\eeps + f(u\eeps), \quad u\eeps(0) = u_0,
\end{equation}
where $u = \among xz$, $\Lambda = \begin{pmatrix}
0 & 0 \\ 0 & A
\end{pmatrix}$ and $f(u) = \among{a(x,z)}{b(x,z)}$. 
We set $d = d_x + d_z$ the dimension of $u$ such that $u \in \R^d$. 
In particular, the dimension of $x\eeps$ can be zero without impacting our results. 
The map $u \mapsto f(u)$ is assumed to be smooth. 
Our theorems do not consider the case where $f$ involves a differential operator in space 
(i.e. the case of partial differential equations). 
Nonetheless, two of our examples are discretized hyperbolic partial differential equations (PDEs) 
for which the method is successfully applied, 
even though a special treatment is required. 

Systems of this kind appear in population dynamics 
(see \cite{greiner1994singular, auger1996emergence, sanchez2000singular, castella2015analysis}), 
where $A$ accounts for migration (in space and/or age) 
and $a,b$ account for both the demographic and inter-population dynamics.
The migration dynamics is quantifiably faster than the other dynamics involved, 
which explains the rescaling by $\eps$ in the model. 
When solving this kind of system numerically, 
problems arise due to the large range of values that $\eps$ can take.

Considering a numerical scheme of order $q > 1$, by definition, for all $\eps$, 
there exists a constant $C(\eps)$ and a time-step $\ol{\Dt}(\eps)$ 
such that for all $\Dt < \ol{\Dt}(\eps)$, the error $E_{\eps}(\Dt)$ when solving~\eqref{intro-pb:full_pb_on_u} is bounded by 
$$
E_{\eps}(\Dt) \leq C(\eps) \Dt^{q} .
$$
Assume now that there exists $\Dt^*$ such that this scheme is stable for all $\eps \in (0,1]$ and $\Dt < \Dt^*$.%
\footnote{ 
In particular, the scheme cannot be any usual explicit scheme 
since it would require a stability condition of the form $\Dt/\eps < C$ with $C$ independent of $\eps$. } 
The order reduction phenomenon manifests itself through the existence of $s < q$ and $C > 0$, 
both independent of $\eps$ such that the uniform error $E(\Dt) := \sup_{\eps} E_{\eps}(\Dt)$ satisfies 
\begin{equation} \label{intro_eq:unif_error}
\sup_{\eps \in (0,1]} E_{\eps}(\Dt) \leq C \Dt^s . 
\end{equation} 
Note that in general $s$ is much smaller than $q$. 
This behaviour is documented for instance in~\cite[Section~IV.15]{hairer1996stiff} or in~\cite{hundsdorfer2007imex}. 
In order to ensure a given error bound, one must either accept this order reduction (if $s > 0$), 
as is done for asymptotic-preserving (AP) schemes~\cite{jin1999efficient} 
by taking a modified time-step $\tilde{\Dt} = \Dt^{q/s}$, 
or use an $\eps$-dependent time-step $\Dt = \bigO(\eps^{\alpha})$ for some $\alpha > 0$. 
In practice, both approaches cause the computational cost of the simulation to increase greatly, often prohibitively so. 

Another common approach to circumvent this is to invoke 
the \textit{center manifold theorem} (see~\cite{vasil1963asymptotic, carr1982, sakamoto1990invariant}) 
which dictates the long-time behaviour of the system 
and presents useful characteristics for numerical simulations: 
the dimension is reduced and the dynamics on the manifold is non-stiff. 
However, this approach does not capture the \textit{transient solution} of the problem, 
i.e. the solution in short time before it reaches the stable manifold. 
This is troublesome when one wishes to describe the system out of equilibrium. 
Furthermore, even if the solution is close to the manifold, 
these approximations are accurate up to a certain order $\bigO(\eps^n)$, 
rendering them useless if $\eps$ is of the order of $1$. 

We first provide a systematic way to compute asymptotic models at any order in $\eps$ 
that approach the solution \textit{even in short time}. 
Then we use the defect of this approximation to compute the solution 
with usual explicit numerical schemes and \textit{uniform} accuracy 
(i.e. the cost and error of the scheme must be independent of $\eps$).
This approach automatically overcomes the challenges posed by both 
extremes $\eps \ll 1$ and $\eps \sim 1$. 
\\

In order to achieve this goal, for any non-negative integer $k$ we construct 
a change of variable for the dissipative problem~\eqref{intro-pb:full_pb_on_u}, 
$(\tau,u) \in \R_+ \times \R^d \mapsto \Omega^{[n]}_{\tau}(u) \in \R^d$, 
and a \textit{non-stiff} vector field $u\in \R^d \mapsto F^{[n]}(u) \in \R^d$, 
such that 
\begin{equation} \label{intro_eq:mima_decomp}
u\eeps(t) = \Omega^{[n]}_{t/\eps} \left( v^{[n]}(t) \right) + w^{[n]}(t) 
\end{equation} 
where $v^{[n]}$ is the \textit{macro} component with dynamics dictated by $F^{[n]}$, 
and $w^{[n]}$ is the \textit{micro} component of size $\bigO(\eps^{n+1})$. 
The main result we prove is that from this decomposition, 
it is possible to compute $u\eeps$ with \textit{uniform accuracy} 
when using explicit exponential Runge-Kutta schemes of order $n+1$ 
(which can be found for instance in~\cite{cite:exp_erk_schm}), 
i.e. it is possible to take $s = q = n+1$ in~\eqref{intro_eq:unif_error}. 
In other words, if $(t_i)_{0 \leq i \leq N}$ is a discretisation of time-step $\Delta t$, 
and $(v_i)$ and $(w_i)$ are computed numerically using such a scheme, 
then there exists \textit{$C$ independent of $\eps$} such that 
$$ 
\max_{0 \leq i \leq N} \left| u\eeps(t_i) - \Omega^{[n]}_{t_i/\eps}(v_i) - w_i \right| \leq C \Dt ^{n+1} 
$$
where $| \cdot |$ is the usual Euclidian norm on $\R^d$. 
Furthermore, using a scheme of order $n$ generates an error proportional to $\eps$ 
on the $z$-component of the solution. 
This is interesting as $z\eeps$ is of size $\eps$ after a time $\bigO(\eps \log(1/\eps))$. 
IMEX methods such as CNLF and SBDF (see~\cite{ascher1995implicit, akrivis1999implicit, hu2019uniform}), 
which mix implicit and explicit solving (for the stiff and non-stiff part respectively) 
are not the focus of the article, 
but their use is briefly discussed in Remark~\ref{mima_rq:imex}. 
\\

Recently in~\cite{cite:asym_bseries}, asymptotic expansions of the solution of~\eqref{intro-eq:xz_pb} were constructed in the case $A = I_{d_z} , $ 
allowing an approximation of the solution of~\eqref{intro-eq:xz_pb} 
with an error of size $\bigO(\eps^{n+1})$.  
This method could be considered to compute the change of variable~$(\tau,u) \mapsto \Omega^{[n]}_{\tau}(u)$. 
However it involves elementary differentials and manipulations on trees 
which are impractical to implement, especially for higher-orders. 
For highly-oscillatory problems, another approach, developed in~\cite{cite:mic_mac}, 
involves a recurrence relation which could later be computed automatically for high orders~\cite{chartier2020high_order}. 
We start by considering the following problem 
\begin{equation} \label{intro_pb:oscill}
\dot y\eeps = -i e^{-i\frac{t}{\eps}\Lambda} f \left( e^{i\frac{t}{\eps} \Lambda} y\eeps \right), \qquad y\eeps(0) = y_0 := u_0 
\end{equation}
on which we apply averaging methods detailed in~\cite{cite:strobo} 
that are in the vein of those initiated by~\cite{perko1969higher} 
in order to approach the solution with the composition of a near-identity periodic map $(\theta, u) \mapsto \Phi^{[n]}_{\theta}(u)$ 
and a flow $(t,u) \mapsto \Psi^{[n]}_t(u)$ following a vector field $G^{[n]}$: 
$ y\eeps(t) = \Phi^{[n]}_{t/\eps} \circ \Psi^{[n]}_t \circ \big( \Phi^{[n]}_0 \big)^{-1}(y_0) + \tilde{y}^{[n]}(t) $
for all $n \geq 0$, where $\tilde{y}^{[n]}$ is of size $\bigO(\eps^{n+1})$ 
and can be computed numerically with a uniform error. 
The change of variable $\Omega^{[n]}$ and the vector field $F^{[n]}$ 
are then deduced from $\Phi^{[n]}$ and $G^{[n]}$ using Fourier series. 
From this, the micro-macro problem defining $v^{[n]}$ and $w^{[n]}$ in~\eqref{intro_eq:mima_decomp} 
for the dissipative problem~\eqref{intro-pb:full_pb_on_u} is deduced. 
\\

The rest of the paper is organized as follows. 
In Section~\ref{sec:avg}, we construct the change of variable and smooth vector field 
used to obtain the macro part in~\eqref{intro_eq:mima_decomp} for Problem~\eqref{intro-pb:full_pb_on_u}. 
These maps are constructed using averaging methods on~\eqref{intro_pb:oscill} 
and properties similar to those of averaging are proven, 
ensuring the well-posedness of the micro-macro equations on $(v^{[n]}, w^{[n]})$ 
as defined in~\eqref{intro_eq:mima_decomp}. 
In Section~\ref{sec:mima}, we study the micro-macro problems associated with 
this new decomposition~\eqref{intro_eq:mima_decomp}, and prove that the micro part $w^{[n]}$ 
is indeed of size $\eps^{n+1}$, and that the problem is not stiff. 
We then state the result of uniform accuracy when using exponential RK schemes. 
In Section~\ref{sec:pde}, we present some techniques 
to adapt our method to discretized hyperbolic PDEs. 
Namely, we study a relaxed conservation law and the telegraph equation, 
which can be respectively found for instance in~\cite{jin1995relaxation} 
and~\cite{lemou2008asymptotic}. 
In Section~\ref{sec:tests}, we verify our theoretical result of uniform accuracy 
by successfully obtaining uniform convergence when numerically solving 
micro-macro problems obtained from a toy ODE 
and from the two aforementioned PDEs. 
\section{Derivation of asymptotic models with error estimates} \label{sec:avg}
\newcommand{\K}{\mathcal{K}}

In this section, we construct the change of variable $(\tau,u) \in \R_+ \times \R^d \mapsto \Omega^{[n]}_{\tau}(u)$ and vector field $u \in \R^d \mapsto F^{[n]}(u)$ 
used in the micro-macro decomposition~\eqref{intro_eq:mima_decomp}. 
In Subsection~\ref{sec:approx:subsec:assump}, assumptions on the vector field $u \mapsto f(u)$ 
and on the solution~$u\eeps$ of~\eqref{intro-pb:full_pb_on_u} are stated. 
In Subsection~\ref{sec:approx:subsec:oscill}, we define a highly-oscillatory problem 
and construct an asymptotic approximation of the solution of this problem as in~\cite{cite:mic_mac}. 
We finish the subsection by summarizing the error bounds associated to this approximation. 
In Subsection~\ref{sec:approx:subsec:dissip}, we finally define $\Omega^{[n]}$ and $F^{[n]}$, 
and state results on error bounds akin to those in the highly-oscillatory case. 
While these are asymptotic expansions, the error bounds are valid 
for all values of $\eps$, so that the micro-macro decomposition~\eqref{intro_eq:mima_decomp} 
is always valid.

\subsection{Definitions and assumptions} \label{sec:approx:subsec:assump}

In order for the highly-oscillatory problem~\eqref{intro_pb:oscill} to be well-defined,  
we first make the following assumption. 
\begin{assumption} \label{approx_hyp:f_poly}
Let us set $d = d_x + d_z$ the dimension of Problem~\eqref{intro-pb:full_pb_on_u}. 
There exists a compact set $X_1 \subset \R^{d_x}$ and a radius $\check{\rho} > 0$ 
such that for every $x$ in $X_1$, the map $u \in \R^d \mapsto f(u) \in \R^d $ 
can be developed as a Taylor series around $\among{x}{0}$, 
and the series converges with a radius not smaller than $\check{\rho}$. 
\end{assumption}

It is therefore possible to naturally extend $f$ to closed subsets of $ \C^d $ 
defined by  
\begin{equation*} \label{approx_def:U_rho} 
\mathcal{U}_{\rho} := \left\{ u \in \C^d \,;\, \exists x \in X_1,\, \left| u - \among{x}{0_{d_z}} \right| \leq \rho \right\} , 
\end{equation*} 
for all $0 \leq \rho < \check{\rho}$ as it is represented by a Taylor series in $u\in \C^d$ on these sets. 
Here $| \cdot |$ is the natural extension of the Euclidian norm on $\R^d$ to $\C^d$.

It may seem particularly restrictive to assume that the $z$-component of the solution $u\eeps$ of~\eqref{intro-pb:full_pb_on_u} 
stays in a neighborhood of $0$, however this is somewhat ensured by the \textit{center manifold theorem}. 
This theorem states that there exists a map $x \in \R^{d_x} \mapsto \eps h\eeps(x) \in \R^{d_x}$ smooth in $\eps$ and $x$, 
such that the manifold $\mathcal{M}$ defined by 
$$ \mathcal M = \left\{ (x,z) \in \R^{d_x} \times \R^{d_z} \: : \: z = \eps h\eeps(x) \right\} $$
is a stable invariant for~\eqref{intro-eq:xz_pb}. 
It also states that all solutions $(x\eeps, z\eeps)$ of~\eqref{intro-eq:xz_pb} 
converge towards it exponentially quickly, 
i.e. there exists $\mu > 0$ independent of $\eps$ such that 
\begin{equation} \label{approx_eq:cent_manifold}
\left| z\eeps(t) - \eps h\eeps(x\eeps(t)) \right| \leq C e^{-\mu t/\eps} .
\end{equation} 
This means that the growth of $z\eeps$ is bounded by that of $x\eeps$, 
and that after a time $t \geq \eps \log(1/\eps)$, $z\eeps(t)$ is of size $\bigO(\eps)$. 
Therefore it is credible to assume that $z\eeps$ stays somewhat close to~$0$. 
This is translated into a second assumption. 

\begin{assumption} \label{approx_hyp:well_posed} 
There exist two radii~$0 < \rho_0 \leq \rho_1 < \check{\rho}$ 
and a closed subset $X_0 \subset X_1 \subset \R^{d_x}$ 
such that the initial condition $ u_0 \in \C^d $ satisfies 
$$ 
\min_{x \in X_0} \left| u_0 - \among{x}{0_{d_z}} \right| \leq \rho_0 , 
$$ 
and for all $\eps \in (0,1]$, 
Problem~\eqref{intro-pb:full_pb_on_u} is well-posed on $[0,1]$ 
with its solution $u\eeps$ in $\mathcal{U}_{\rho_1}$. 
\end{assumption} 
Note that this is different to assuming that the initial data $(x_0, z_0)$ is close to the center manifold. 
For $\rho \in [0, \check{\rho} - \rho_1),$ we define the sets 
\begin{equation} \label{approx_def:set_K}
\K_{\rho} := \mathcal{U}_{\rho_1 + \rho} = \left\{ u \in \C^d \,;\, \exists x \in X_1, \left| u - \among{x}{0} \right| \leq \rho_1 + \rho \right\} 
\end{equation} 
which help quantify the distance to the solution $u\eeps$. 
By Assumption~\ref{approx_hyp:well_posed}, the solution of~\eqref{intro-pb:full_pb_on_u} 
is in $\K_0$ at all time. 
\subsection{Constructing an approximation of the periodic problem} \label{sec:approx:subsec:oscill}
\newcommand{\T}{\mathbb{T}}

Writing $\mathbb T = \R/2\pi\mathbb Z$, 
we define a map $ (\theta, u) \in \mathbb T\times \mathcal{U}_{\check{\rho}} \mapsto g_{\theta}(u) \in \C^d $ by 
\begin{equation} \label{approx_def:g} 
g_{\theta}(u) := -ie^{-i\theta \Lambda} f \left( e^{i\theta \Lambda}u \right) . 
\end{equation} 
Thanks to Assumption~\ref{approx_hyp:f_poly}, $g$ is well-defined 
and is analytic w.r.t. both~$\theta$ and~$u$. 
In this subsection, we consider the highly-oscillatory problem 
\begin{equation} \label{approx-pb:periodic} 
\dot{y}\eeps = g_{t/\eps}(y\eeps), \qquad y\eeps(0) = y_0 := u_0 ,
\end{equation} 
of which we approach the solution using averaging techniques based on a recurrence relation from~\cite{cite:strobo}. 
The following construction and results are taken from \cite{cite:mic_mac}, 
where they are described in (much) more detail 
and where they serve to construct the macro-part of a micro-macro decomposition of~$y\eeps$. 
We start by writing the solution of~\eqref{approx-pb:periodic} as a composition 
\begin{equation} \label{approx-eq:periodic_decomposition}
y\eeps(t) = \Phi_{t/\eps}^{[n]} \circ \Psi_t^{[n]} \circ \big( \Phi_0^{[n]} \big)^{-1}(y_0) + \bigO(\eps^{n+1}) 
\end{equation}
where $\Phi^{[n]}$ is a change of variable $(\theta,u) \in \mathbb T\times \mathcal{U}_{\check{\rho}} \rightarrow \Phi\eeps_{\theta}(u) \in \C^d$ and $\Psi^{[n]}$ is the flow map of an autonomous differential equation
$$ \frac{\D}{\D t} \Psi_t^{[n]}(u) = G^{[n]} \Big( \Psi_t^{[n]}(u) \Big), \qquad \Psi_0^{[n]} = \id $$
where $G^{[n]}$ is a smooth map which must be determined. 

The idea behind this composition is that $\Psi^{[n]}$ captures the slow drift while $\Phi^{[n]}$ captures rapid oscillations. 
In this work, we focus on standard averaging, 
meaning the change of variable is of identity average, 
i.e. $\langle \Phi^{[n]} \rangle = \id$. 
The average is defined by 
\begin{equation} \label{approx_eq:def_avg}
\langle \varphi \rangle (u) := \frac{1}{2\pi} \int_{-\pi}^{\pi} \varphi_{\theta}(u) \D\theta. 
\end{equation}
The change of variable $\Phi^{[n]}$ is computed iteratively using the relation 
\begin{equation} \label{approx-eq:periodic_iter}
\Phi^{[n+1]}_{\theta} = \id + \eps \int_0^{\theta} T(\Phi^{[n]})_{\sigma}\D\sigma - \eps \left\langle \int_0^{\bullet} T(\Phi^{[n]})_{\sigma} \D\sigma \right\rangle 
\end{equation}
with initial condition $\Phi^{[0]} = \id$. 
The operator $T$ is defined for maps $(\theta,u) \mapsto \varphi_{\theta}(u)$ with identity average as 
\begin{equation} \label{approx_def:operator_T}
T(\varphi)_{\sigma} = g_{\sigma} \circ \varphi_{\sigma} - \dpu \varphi_{\sigma} \cdot \langle g \circ \varphi \rangle . 
\end{equation}
From these changes of variable $\Phi^{[n]}$ we define vector fields $G^{[n]}$ and defects $\delta^{[n]}$ by
\begin{equation} \label{approx_eq:def_G_delta}
G^{[n]} := \langle g\circ \Phi^{[n]} \rangle, \qquad \delta^{[n]} := \inveps \dptheta \Phi^{[n]} + \dpu \Phi^{[n]} G^{[n]} - g\circ \Phi^{[n]}. 
\end{equation}
Note that by definition, $\langle \delta^{[n]} \rangle = 0$.

Given a radius $\rho \geq 0$ and a map $(\theta,u)\in \mathbb T \times \K_{\rho} \mapsto \varphi_{\theta}(u)$ analytic in $u$ and $\nu$-times continuously differentiable in $\theta$, we define the norms
\begin{equation} \label{approx_eq:def_norms_perio}
\| \varphi \|_{\T, \rho} := \sup_{(\theta, u) \in \mathbb T \times \K_{\rho}} | \varphi_{\theta}(u) |,
\qquad
\| \varphi \|_{\T, \rho,\nu} := \sup_{0 \leq \beta \leq \nu} \| \dptheta^{\:\beta} \varphi \| _{\T, \rho} .
\end{equation}
We later use these norms to state error bounds on maps $\Phi^{[n]}$ and $\delta^{[n]}$. 

\begin{property}
\label{approx_prop:assump_perio}
Assumptions~\ref{approx_hyp:f_poly} and~\ref{approx_hyp:well_posed}
ensure the following properties: 
\begin{enumerate}[(i)]
\setlength{\itemsep}{0pt}
\item There exists a final time $T > 0$ such that for all $\eps \in (0,1]$, 
Problem~\eqref{approx-pb:periodic} is well-posed on $[0, T]$ 
with its solution~$y\eeps$ in $\K_0$. 
\item There exists a radius $R > 0$ such that for all $\theta \in \mathbb T$, the function $u \mapsto g_{\theta}(u) $ is analytic from $\K_{2R}$ to $\C^d$.
\item As the function $(\theta, u) \mapsto g_{\theta}(u)$ is analytic w.r.t. $\theta$, 
we fix an arbitrary rank $p > 0$ 
and set $M > 0$ a constant such that for all $\sigma \in [0, 3]$, 
\begin{equation} \label{approx_def:cst_M}
\forall\ 0\leq \nu \leq p+2, \quad \frac{\sigma^{\nu}}{\nu!} \left\| \dptheta^{\:\nu} g \right\|_{\T, 2R} \leq M , 
\end{equation} 
\end{enumerate} 
\end{property}
\noindent%
This allows us to get averaging results which can be summed up in the following theorem: 

\begin{theorem}[from \cite{cite:mic_mac} and \cite{cite:strobo}] \label{approx-thm:period_bounds}
For $n \in \N^*$, let us denote $r_n = R/n$ and $\eps_n := r_n/16 M$ 
with $R$ and $M$ defined in Property~\ref{approx_prop:assump_perio}. 
For all $\eps > 0$ such that $\eps \leq \eps_n$, the maps $\Phi^{[n]}$ and $G^{[n]}$ are well-defined 
by~\eqref{approx-eq:periodic_iter} and~\eqref{approx_eq:def_G_delta}. 
The change of variable $\Phi^{[n]}$ and the defect $\delta^{[n]}$ are both 
$(p+2)$-times continuously differentiable w.r.t. $\theta$, 
and $\Phi_0^{[n]}$ is invertible with analytic inverse on $\K_{R/4}$. 
Moreover, the following bounds are satisfied for $0 \leq \nu \leq p+1$,
\begin{align*}
(i)\quad{}&{} \| \Phi^{[n]} - \id \|_{\T, R} \leq 4\eps M \leq \frac{r_n}{4} ,
&  
(ii)\quad{}&{} \|\dptheta^{\:\nu} \Phi^{[n]} \|_{\T, R} \leq 8\eps M \nu! 
\\
(iii)\quad{}&{} \| G^{[n]} \|_{\T, R} \leq 2M 
& 
(iv)\quad{}&{} \|\delta^{[n]} \|_{\T, R,p+1} \leq 2M \left( 2\mathcal Q_p \frac{\eps}{\eps_n} \right)^n 
\end{align*}
where $\mathcal Q_{p}$ is a $p$-dependent constant. 
\end{theorem}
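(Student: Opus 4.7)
The plan is to prove the four bounds by induction on $n$, exploiting the recurrence \eqref{approx-eq:periodic_iter} together with Cauchy estimates in $u$ on the nested complex neighbourhoods $\K_\rho$. The particular choice $r_n = R/n$ is the standard bookkeeping device in perturbative averaging: each inductive step consumes a strip of radius $r_n$ when a Cauchy estimate is applied in $u$, so that the cumulative loss $n \cdot r_n = R$ never escapes the domain $\K_{2R}$ on which $g$ is analytic (item (ii) of Property~\ref{approx_prop:assump_perio}). Accordingly, I would actually prove the bounds of (i)--(iv) on a telescopic family of domains $\K_{R + (n-k) r_n}$ at the $k$-th iterate, so that the final bounds at rank $n$ hold on $\K_R$.

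Initialization is immediate: $\Phi^{[0]} = \id$ trivially satisfies (i) and (ii), $G^{[0]} = \langle g \rangle$ obeys (iii) from \eqref{approx_def:cst_M}, and $\delta^{[0]} = -g$ trivially satisfies (iv) up to absorbing a multiplicative constant into $\mathcal Q_p$. For the inductive step, I would first bound the composition $g \circ \Phi^{[n]}$: by (i) the map $\Phi^{[n]}$ sends $\K_R$ into $\K_{R + 4\eps M} \subset \K_{2R}$, so that $\|g \circ \Phi^{[n]}\|_{\T,R} \leq M$ directly from Property~\ref{approx_prop:assump_perio}. This yields (iii) at rank $n+1$ by averaging, and, together with a Cauchy estimate $\|\partial_u \Phi^{[n]} - I\|_{\T, R - r_n} \leq 4\eps M / r_n$, gives a bound on $T(\Phi^{[n]})$ defined by \eqref{approx_def:operator_T}. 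Injecting this in the recurrence \eqref{approx-eq:periodic_iter} produces (i) at rank $n+1$; differentiating the same recurrence $\nu$ times in $\theta$ and using the analyticity-in-$\theta$ control provided by \eqref{approx_def:cst_M} yields (ii) with the factorial factor $\nu!$ coming from the Taylor coefficients of $g$ in $\theta$.

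The main obstacle is (iv): obtaining the geometric gain $(\eps/\eps_n)^n$ for the defect. The key observation is that substituting \eqref{approx-eq:periodic_iter} into the definition \eqref{approx_eq:def_G_delta} of $\delta^{[n+1]}$ produces a cancellation at leading order, leaving a remainder of the form $\delta^{[n+1]} = T(\Phi^{[n+1]}) - T(\Phi^{[n]}) + \text{lower-order terms}$. Because $T$ is Lipschitz in its argument with Lipschitz constant controlled by Cauchy estimates on $\K_{R+r_n} \setminus \K_R$, this difference is proportional to $\Phi^{[n+1]} - \Phi^{[n]}$, which itself is $\eps\, T(\Phi^{[n]}) - \eps\, T(\Phi^{[n-1]})$ up to the average, hence proportional to $\eps \cdot \delta^{[n]}$ up to the Lipschitz factor $C/r_n$. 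Iterating this relation down to $\delta^{[0]}$ produces a product of $n$ factors of the form $\eps \cdot C / r_n = 16\,\eps M / R \cdot C'$, which is precisely the geometric factor $(2\mathcal Q_p\, \eps/\eps_n)^n$ once the combinatorial constants arising from differentiating up to order $p+1$ in $\theta$ are collected into $\mathcal Q_p$. Invertibility of $\Phi_0^{[n]}$ on $\K_{R/4}$ is then a direct consequence of (i) and the analytic inverse function theorem applied to a map that is $\bigO(\eps)$-close to identity. As these bookkeeping arguments are carried out in detail in \cite{cite:strobo, cite:mic_mac}, I would merely restate the conclusions and defer to those references.
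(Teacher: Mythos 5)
The paper offers no proof of this theorem: it is imported verbatim from the cited references \cite{cite:mic_mac} and \cite{cite:strobo}, so there is nothing internal to compare against. Your sketch reproduces the standard argument of those works --- induction on the iterates with Cauchy estimates on nested domains $\mathcal{K}_{R+(n-k)r_n}$, and the exact cancellation $\delta^{[n+1]} = T(\Phi^{[n]}) - T(\Phi^{[n+1]})$ (no lower-order remainder is even needed) combined with the Lipschitz bound on $T$ and $\Phi^{[n+1]}-\Phi^{[n]} = \mathcal{O}(\eps\,\delta^{[n]})$ to get the geometric factor --- and is sound; the only slip is that $\delta^{[0]} = \langle g \rangle - g$ rather than $-g$, which still satisfies the $2M$ bound, so your proposal is consistent with the proof the paper delegates to those references.
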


These properties ensure that the micro-macro problem is well-posed in~\cite{cite:mic_mac}. 
We now use these maps $\Phi^{[n]}$, $G^{[n]}$ and $\delta^{[n]}$ in order to define a decomposition for the dissipative problem~\eqref{intro-pb:full_pb_on_u}, 
and show that similar properties are satisfied.

\subsection{A new decomposition in the dissipative case}
\label{sec:approx:subsec:dissip}

A map $(\theta, u) \in \mathbb{T} \times \K_{\rho} \mapsto \varphi_{\theta}(u)$ 
which is continuously differentiable w.r.t. $\theta$ 
coincides everywhere with its Fourier series, i.e. 
\begin{equation} \label{approx_def:fourier_coeff}
\varphi_{\theta}(u) = \sum_{j\in\mathbb Z} e^{ij\theta} c_j\big( \varphi \big)(u), 
\ \ \text{where}\ \ 
c_j \big( \varphi \big)(u) = \frac{1}{2\pi} \int_{-\pi}^{\pi} e^{-ij\theta} \varphi_{\theta}(u) \D\theta. 
\end{equation} 
We define the shifted map $\tilde{\varphi}$ by 
\begin{equation} \label{approx_def:shift_operator}
\tilde{\varphi}_{\theta}(u) = e^{i\theta \Lambda} \varphi_{\theta}(u) . 
\end{equation}
Using these Fourier coefficients $(c_j)_{j\in \mathbb Z}$, 
we consider new maps by setting the change of variable $\Omega^{[n]}$ and the defect $\eta^{[n]}$, for $(\tau, u) \in \R_+ \times \K_{R_k}$, 
\begin{equation} \label{approx-eq:def_diff_var_chg}
\Omega_{\tau}^{[n]}(u) := \sum_{j \in \mathbb Z} e^{-j\tau} c_j \big( \widetilde{\Phi}^{[n]} \big)(u) ,
\qquad 
\eta_{\tau}^{[n]}(u) := i \sum_{j \in \mathbb Z} e^{-j\tau} c_j \big( \widetilde{\delta}^{\,[n]} \big)(u) .
\end{equation}
These series are purely formal for now, and their convergence is demonstrated at the end of this subsection. 
Here $\widetilde{\Phi}^{[n]}$ and $\widetilde{\delta}^{[n]}$ are respectively 
the shifted change of variable and the shifted defect, 
with the shift given by~\eqref{approx_def:shift_operator}. 
If there exists an index $j < 0$ and a vector $u \in \K_{\rho}$ 
such that $c_j(\widetilde{\Phi}^{[n]})(u) \neq 0$, 
then $\Omega^{[n]}_{\tau}(u)$ cannot be bounded uniformly for all $\tau \in \R_+$. 
We also define the flow $\Gamma^{[n]}$ by setting 
\begin{equation} \label{approx-eq:diff_flow}
\frac{\D}{\D t} \Gamma_t^{[n]}(u) = F^{[n]} \Big( \Gamma^{[n]}_t(u) \Big), 
\quad \text{where}\quad
F^{[n]} = i G^{[n]} .
\end{equation}
Note that we do not know the lifetime of any particular solution of the Cauchy problem $\dpt v = F^{[n]}(v)$, $v(0) = v_0 \in \K_R$ yet.

\begin{remark}
From the identity $g_{\theta}(\ol{u}) = - \ol{g_{-\theta}(u)}$, 
one can obtain the relations on the Fourier coefficients 
\begin{equation} \label{approx_eq:maps_are_real}
c_j(\Phi^{[n]})(\ol{u}) = \ol{c_j(\Phi^{[n]}) (u)} 
\quad\text{and}\quad
c_j(\delta^{[n]})(\ol{u}) = -\ol{c_j(\delta^{[n]}) (u)} . 
\end{equation} 
The same holds for $\tilde{\Phi}^{[n]}$ and $\tilde{\delta}^{[n]}$, 
as the tilde operator simply shifts the indices of these coefficients 
component by component. 
This ensures that if $u$ is in $\R^d$ then so are $\Omega^{[n]}_{\tau}(u)$ and $\eta^{[n]}_{\tau}(u)$. 
Similarly, if $u$ is in $\R^d$ then $F^{[n]}(u)$ is in $\R^d$. 

For $\Phi^{[n]}$, for instance, it is equivalent to $\Phi^{[n]}_{\theta}(\ol{u}) = \ol{\Phi^{[n]}_{-\theta}(u)}$, 
and can be shown by induction using~\eqref{approx-eq:periodic_iter}. 
Indeed, for $j = 0$, $c_j(\Phi^{[n]})(u) = u$ (making the result straightforward), 
and for $j \neq 0$ the change of variable satisfies 
$c_j(\Phi^{[n+1]})(u) = \frac{1}{ij} c_j \left( T(\Phi^{[n]}) \right) (u)$ with $T(\Phi^{[n]})$ defined by~\eqref{approx_def:operator_T}. 
The following calculation is also valid for $j=0$, 
\begin{align*}
c_j(g\circ \Phi^{[n]})(\ol{u}) 
&= \frac{1}{2\pi} \int_{-\pi}^{\pi} e^{-ij\theta} g_{\theta}\left( \Phi^{[n]}_{\theta} (\ol{u}) \right) \D\theta 
= \frac{1}{2\pi} \int_{-\pi}^{\pi} e^{-ij\theta} g_{\theta}\left( \ol{ \Phi^{[n]}_{-\theta} (u) } \right) \D\theta 
\\
&= -\frac{1}{2\pi} \int_{-\pi}^{\pi} \ol{ e^{ij\theta} g_{-\theta} \left( \Phi^{[n]}_{-\theta}(u) \right) } \D \theta 
= -\ol{c_j( g\circ \Phi^{[n]})(u)} .
\end{align*}
From this, one gets $c_j\left( T(\Phi^{[n]}) \right) (\ol{u}) 
= - \ol{c_j(g\circ \Phi^{[n]})(u)} - \ol{\dpu c_j(\Phi^{[n]})(u)} \cdot \left( - \ol{c_0( g\circ \Phi^{[n]})(u)} \right) 
= - \ol{c_j \left( T(\Phi^{[n]}) \right) (u)} $, 
yielding the desired result. 
\end{remark}

The micro part $w^{[n]}$ of the decomposition is the difference between 
the solution $u\eeps$ of~\eqref{intro-pb:full_pb_on_u} 
and the asymptotic approximation $ \Omega_{t/\eps}^{[n]} \circ \Gamma_t^{[n]} \circ \big( \Omega_0^{[n]} \big)^{-1}(u_0) $. 
Assuming that $\Omega^{[n]}$ and $\eta^{[n]}$ are well-defined 
(this is proved it in Theorem~\ref{approx_thm:diff_cov_bounds}), 
it is necessary to show that the map $\eta^{[n]}$ can be characterized 
as a \textit{defect} (similarly to $\delta^{(k]}$). 
Being a defect means that $\eta^{[n]}$ characterises the error 
of the approximation $\frac{\D}{\D t} \left[ \Omega^{[n]}_{t/\eps} \circ \Gamma^{[n]}_t \right] 
\approx -\inveps \Lambda \Gamma^{[n]}_t + f \circ \Omega^{[n]}_{t/\eps} \circ \Gamma^{[n]}_t $. 
A straightforward computation yields 
\begin{equation} \label{approx_eq:other_def_eta} 
\eta^{[n]}_{\tau} = \sum_{j \in \mathbb Z} e^{-j\tau} c_j \left( \frac{i}{\eps} \widetilde{\dptheta \Phi} ^{[n]} + \widetilde{\dpu \Phi} ^{[n]} \cdot (i G^{[n]}) - i \widetilde{g \circ \Phi}^{[n]} \right)
\end{equation}
where we can recognize $\widetilde{\dptheta \varphi} = \dptheta \widetilde{\varphi} - i \Lambda \widetilde{\varphi}$, 
$\tilde{\dpu \varphi} = \dpu \tilde{\varphi}$ 
and $i \widetilde{g\circ \varphi} = f \circ \widetilde{\varphi}$. 
The characterization as a defect requires the following result:

\begin{lemma} \label{approx_lemma:commut_cj}
Let $\rho$ and $r$ be two radii such that $ 0 \leq \rho < r \leq 2R $ 
and let $\nu$ be a positive integer. 
We set $\varphi$ a periodic map $(\theta, u) \in \mathbb{T} \times \K_{\rho} \mapsto \varphi_{\theta}(u) \in \K_r $ that is near-identity in the sense  
$$
\forall (\theta, u) \in \mathbb{T} \times \K_{\rho} , \quad 
\left| \varphi_{\theta}(u) - u \right| \leq r - \rho 
$$ 
and that is continuously differentiable w.r.t. $\theta$ for all $u \in \K_{\rho}$. 
With the definitions of~\eqref{approx_def:fourier_coeff} and~\eqref{approx_def:shift_operator}, 
assume that all the Fourier coefficients of negative index of the shifted map $\tilde{\varphi}$ vanish. 
Then, setting $D := \{ \xi \in \C ,\ |\xi| < 1 \}$ and $\ol{D}$ its closure, 
the map $ (\xi, u) \in \ol{D} \times \K_{\rho} \mapsto \sum_{j \in \mathbb{Z}} \xi^j c_j \left( \tilde{\varphi} \right) (u) $ 
is well-defined with values in $\K_{r}$, $p$-times continuously differentiable. 
Furthermore, for all $(\xi, u) \in \ol{D} \times \K_{\rho}$, 
when composing with the vector field $u \mapsto f(u)$ from~\eqref{intro-pb:full_pb_on_u} (satisfying Assumption~\ref{approx_hyp:f_poly}), 
the following identity is met 
$$ 
f \left( \sum_{j \geq 0} \xi^j c_j ( \tilde{\varphi} ) (u) \right) 
= \sum_{j \in \mathbb{Z} } \xi^j c_j \left( f \circ \tilde{\varphi} \right) (u) 
$$ 
and for all $ j < 0 $, $c_j \left( f \circ \tilde{\varphi} \right)(u)$ is identically zero. 
In particular the map $(\tau, u) \in \R_+ \times \K_{\rho} \mapsto \sum_{j \in \mathbb Z} e^{-j\tau} c_j(\tilde{\varphi})(u) $ 
is well-defined with values in $\K_r$. 
\end{lemma}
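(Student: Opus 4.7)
My plan is to view the series $V(\xi, u) := \sum_{j \in \mathbb Z} \xi^j c_j(\widetilde{\varphi})(u)$ as a generating function encoding the Fourier coefficients of $\widetilde{\varphi}$. Since negative-index coefficients vanish by assumption and $\widetilde\varphi$ is continuous in $\theta$, this is a genuine power series in $\xi$ defining a map holomorphic in $\xi$ on the open disk $D$ that extends continuously to $\ol D \times \K_{\rho}$ with boundary values $V(e^{i\theta}, u) = \widetilde{\varphi}_{\theta}(u)$; equivalently, $V(\cdot, u)$ belongs to the disk algebra for every fixed $u$. Once this groundwork is laid, the lemma reduces to three claims I would tackle in order: (a)~$V(\xi, u)$ takes its values in $\K_r$; (b)~the composition identity $f(V(\xi, u)) = \sum_{j \in \mathbb Z} \xi^j c_j(f \circ \widetilde{\varphi})(u)$ holds; and (c)~the Fourier coefficients $c_j(f \circ \widetilde\varphi)$ vanish for all $j < 0$.

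The main obstacle will be (a), because $\K_r$ is a union of balls around points $\among x 0$ with $x \in X_1$ rather than a convex set, and a vector-valued maximum principle applied directly to $V$ will not place its image in $\K_r$. My strategy is to split off the identity: writing $u = \among{x_u}{z_u}$, I would decompose
$$
V(\xi, u) = \among{x_u}{\xi^A z_u} + V_{\chi}(\xi, u), \qquad V_{\chi}(\xi, u) := \sum_{j \geq 0} \xi^j c_j\big(\widetilde{\chi}\big)(u),
$$
where $\widetilde{\chi}_{\theta}(u) := \widetilde{\varphi}_{\theta}(u) - e^{i\theta \Lambda}u$ is the tilde-shift of $\varphi - \id$. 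The shifted identity $\widetilde{\id}_\theta(u) = \among{x_u}{e^{i\theta A} z_u}$ has Fourier support contained in the non-negative integer spectrum of $A$, so $\widetilde\chi$ too has no negative-index Fourier coefficients, and the first piece of the decomposition collapses to the closed form above. On the unit circle, the near-identity hypothesis then gives $|\widetilde{\chi}_{\theta}(u)| = |\varphi_{\theta}(u) - u| \leq r - \rho$, using that $e^{i\theta \Lambda}$ is unitary for real $\theta$ (as $\Lambda$ is a real symmetric matrix with integer coefficients). Subharmonicity of $\xi \mapsto |V_{\chi}(\xi, u)|^2$ propagates this bound to all of $\ol D$, and combining $|V_\chi(\xi, u)| \leq r - \rho$ with the contraction $|\xi^A z_u| \leq |z_u|$ for $|\xi| \leq 1$ (valid since the exponents are non-negative integers) will yield $|V(\xi, u) - \among x 0| \leq \rho + (r - \rho) = r$ once $x \in X_1$ is chosen so that $|u - \among x 0| \leq \rho$.

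For (b) and (c), I would invoke the analyticity of $f$ on $\K_{2R} \supset \K_r$ granted by Property~\ref{approx_prop:assump_perio}(ii): once (a) is in hand, $\xi \mapsto f(V(\xi, u))$ is holomorphic in $D$ and continuous on $\ol D$, with boundary values $(f \circ \widetilde{\varphi})_\theta(u)$. Identifying its Taylor coefficients in $\xi$ at the origin with the Fourier coefficients of its boundary values via the Cauchy integral formula then delivers simultaneously the series identity of (b) and the vanishing asserted in (c). The closing assertion on the map $(\tau, u) \mapsto \sum_{j \in \mathbb Z} e^{-j\tau} c_j(\widetilde\varphi)(u)$ follows at once by specializing to $\xi = e^{-\tau} \in [0, 1] \subset \ol D$, and the claimed regularity in $(\xi, u)$ will come from differentiating under the Cauchy integral in $\xi$ and exploiting analyticity of $\widetilde\varphi$ in $u$.
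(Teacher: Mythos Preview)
Your proof is correct and matches the paper's approach closely: both split off the shifted identity $e^{i\theta\Lambda}u$, bound the remainder $V_\chi$ via the maximum modulus principle, and then invoke the Cauchy integral formula on $\xi \mapsto f(V(\xi,u))$ to identify its Taylor coefficients with the Fourier coefficients of $f\circ\tilde\varphi$, thereby obtaining both the composition identity and the vanishing of negative modes. One minor slip to fix: since $\K_\rho := \mathcal{U}_{\rho_1+\rho}$, membership $u \in \K_\rho$ only gives $\left|u - \among{x}{0}\right| \leq \rho_1 + \rho$ (not $\leq \rho$), so your final bound should read $\left|V(\xi,u) - \among{x}{0}\right| \leq (\rho_1 + \rho) + (r - \rho) = \rho_1 + r$, which is precisely the condition for $V(\xi,u) \in \K_r$.
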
 

\begin{proof}
\newcommand{\fcircPhi}{f \circ \tilde{\Phi}^{[k]}}
Let us work at fixed $u \in \K_{\rho}$. 
By product $\tilde{\varphi}$ is continuously differentiable w.r.t. $\theta$, 
therefore the series of its Fourier coefficients is absolutely convergent. 
Furthermore, $\tilde{\varphi}$ only has nonnegative modes by assumption, 
meaning the indices can be restricted to nonnegative values 
in the definition 
$$
\zeta : \xi \in \ol{D} \mapsto \zeta (\xi) 
:= \sum_{j \in \mathbb{Z} } \xi^j c_j\left( \tilde{\varphi} \right)(u) . 
$$ 
As such, the function $\zeta$ is well-defined on $\ol{D}$ and is holomorphic on $D$.

Let us now show that it has values in $\K_r$. 
Because $u$ is in $\K_{\rho}$, by~\eqref{approx_def:set_K}, 
we set $x \in X_1$ such that 
$
\left| u - \among{x}{0} \right| \leq \rho_1 + \rho . 
$ 
Using a triangle inequality in the definition of~$\zeta$ yields 
\begin{equation} \label{approx_eq:lemma:bound_zeta}
\left| \zeta(\xi) - \among{x}{0} \right| 
\leq \left| \xi^{\Lambda} u - \among{x}{0} \right| 
+ \left| \sum_{j \geq 0} \xi^j c_j \left( \tilde{\varphi - \id} \right) (u) \right| 
\end{equation} 
where $\xi^{\Lambda} = (\xi^{\lambda_1}, \ldots, \xi^{\lambda_d})^T$ 
if $\Lambda = \mathrm{Diag}(\lambda_1, \ldots, \lambda_d)$ 
and by convention $\xi^0 = 1$ for all $\xi \in \C$. 
Because $\lambda_{\alpha} = 0$ for all $1 \leq \alpha \leq d_x$,  
$$ 
\left| \xi^{\Lambda} u - \among{x}{0} \right| \leq \left| u - \among{x}{0} \right| \leq \rho_1 + \rho , 
$$ 
and according to the maximum modulus principle 
$$ 
\left| \sum_j \xi^j c_j \left( \tilde{\varphi - \id} \right) (u) \right| 
\leq \sup_{\theta \in \mathbb{T} } | \varphi_{\theta}(u) - u | \leq r - \rho . 
$$ 
The bound 
$ 
\left| \zeta(\xi) - \among{x}{0} \right| \leq \rho_1 + r 
$ 
follows from~\eqref{approx_eq:lemma:bound_zeta}, 
therefore by~\eqref{approx_def:set_K}, 
$\zeta(\xi)$ is in $\K_r$. 

In turn, the function $\xi \mapsto \mathfrak{f}(\xi) := f(\zeta(\xi))$ is well-defined for all $\xi \in \ol{D}$, 
is continuous on this set, and is holomorphic on $D$. 
As such, it can be developed as a power series around $\xi = 0$. 
We write $(\beta_j)$ the coefficients of this power series such that 
for $\xi$ in a neighborhood of $0$, $\beta(\xi) = \sum_{j \in \mathbb Z} \xi^j \beta_j$. 
By Cauchy formula, 
$$ 
\beta_j 
= \frac{1}{2i\pi} \oint_{|\xi| = 1} \xi^{-(j+1)} f \big( \zeta(\xi) \big) \D \xi 
= \frac{1}{2\pi} \int_{-\pi}^{\pi} e^{-i j \theta } f \circ \tilde{\varphi}_{\theta}(u) \D \theta 
= c_j( f \circ \tilde{\varphi}) (u)
, 
$$
therefore 
$ 
f \left( \sum_{j \geq 0} \xi^j c_j ( \tilde{\varphi} ) (u) \right) 
= \sum_{j \in \mathbb Z} \xi^j c_j \left( f \circ \tilde{\varphi} \right) (u) . 
$ 
For $j < 0$, Cauchy's integral theorem ensures that $\beta_j$ vanishes, 
i.e. that $ c_j(f \circ \tilde{\varphi})(u)$ vanishes. 
\end{proof}

Assuming now that $\Phi^{[n]}$ satisfies the assumptions of Lemma~\ref{approx_lemma:commut_cj} 
(this will be proved in Theorem~\ref{approx_thm:diff_cov_bounds}), 
from~\eqref{approx_eq:other_def_eta} we get 
\begin{equation} \label{approx_eq:eta_defect}
\eta^{[n]}_{\tau} 
= \frac{1}{\eps}\left( \dptau + \Lambda \right) \Omega^{[n]}_{\tau} + \dpu \Omega^{[n]}_{\tau} \cdot F^{[n]} - f \circ \Omega^{[n]} . 
\end{equation} 
This means that $\eta^{[n]}$ is indeed a defect, 
and this relation will later serve to prove that $w^{[n]}$ is of size $\bigO(\eps^{n+1})$. 

Before proceeding, given a radius $\rho \in [0,2R]$ 
and a map $(\tau,u)\in \R_+ \times \K_{\rho} \mapsto \psi_{\tau}(u)$, 
let us introduce the norm 
\begin{equation} \label{approx_def:norm_dissip}
\| \psi \|_{\rho} := \sup_{(\tau,u) \in \R_+ \times \K_{\rho}} | \psi_{\tau}(u) | . 
\end{equation}

\begin{lemma} \label{approx-lemma:filt_bnds}
Given a radius $\rho \in [0, 2R]$ and an integer $\nu \in \mathbb N$, let $\varphi$ be a periodic map $(\theta,u) \in \mathbb T \times \K_{\rho} \mapsto \varphi_{\theta}(u)$ 
that is analytic w.r.t. $u$, that is $(\nu + 1)$-times continuously differentiable w.r.t. $\theta$ 
and that has vanishing Fourier coefficients for negative indices, 
i.e. for all $j < 0$, $c_j(\varphi)$ is identically zero. 
Then the associated dissipative map $(\tau,u)\in \R_+ \times \K_{\rho} \mapsto \psi_{\tau}(u)$ defined by 
$$ \psi_{\tau}(u) := \sum_{j\in \mathbb Z} e^{-j\tau} c_j(\varphi)(u) $$
is well defined for $(\tau,u) \in \R_+ \times \K_{\rho}$, analytic w.r.t. $u$ and $\nu$-times continuously differentiable w.r.t. $\tau$. 
Furthermore it respects the following bounds for $0\leq k \leq \nu$, 
$$ 
\big\| \dptau^{\: k} \psi_{\tau} \big\|_{\rho} 
\leq \| \dptheta^{\: k} \varphi\|_{\T, \rho} 
$$
where the norm on $\psi$ and its derivatives is defined by~\eqref{approx_def:norm_dissip}. 
\end{lemma}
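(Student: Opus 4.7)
The central idea is to recognize $\psi_\tau$ as the evaluation on $(0,1]$ of a holomorphic extension of $\varphi$ to the closed unit disk, so that the desired bounds come directly from the maximum modulus principle. Concretely, for each $u\in\K_\rho$ I would define
$$
\widetilde{\varphi}(\xi,u) := \sum_{j \geq 0} \xi^j c_j(\varphi)(u), \qquad \xi \in \overline{D},
$$
using the hypothesis that $c_j(\varphi) \equiv 0$ for $j<0$. Since $\varphi$ is $(\nu+1)$-times continuously differentiable in $\theta$ with $\nu\geq 0$, a standard integration-by-parts argument gives $|c_j(\varphi)(u)| \leq \|\dptheta^{\nu+1} \varphi\|_{\mathbb T,\rho}/|j|^{\nu+1}$ for $j\neq 0$, so the series converges normally on $\overline{D}\times \K_\rho$. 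This makes $\widetilde\varphi$ continuous on $\overline D \times \K_\rho$, holomorphic in $\xi$ on $D$, and analytic in $u$ on $\K_\rho$ (as a uniform limit of maps analytic in $u$). On the boundary $\xi = e^{i\theta}$ the series reproduces $\varphi_\theta(u)$, while substituting $\xi = e^{-\tau}$ for $\tau\geq 0$ gives $\widetilde\varphi(e^{-\tau},u) = \psi_\tau(u)$; this immediately yields the well-posedness of $\psi$ and its analyticity in $u$.

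For the derivative estimates, I would relate $\dptau^k \psi$ to $\dptheta^k \varphi$ via their Fourier coefficients. For $k\geq 1$ and $j\geq 1$, one has $c_j(\dptheta^k \varphi) = (ij)^k c_j(\varphi)$, and the $j=0$ Fourier coefficient of $\dptheta^k\varphi$ vanishes. Differentiating termwise (justified by the normal convergence above, using $k \leq \nu$ so $\dptheta^k\varphi$ is still at least $C^1$ in $\theta$), I get
$$
\dptau^k \psi_\tau(u) \;=\; \sum_{j\geq 1} (-j)^k e^{-j\tau} c_j(\varphi)(u) \;=\; i^k \sum_{j\geq 0} e^{-j\tau} c_j(\dptheta^k \varphi)(u) \;=\; i^k\, \widetilde{\dptheta^k \varphi}(e^{-\tau},u),
$$
where the extension $\widetilde{\dptheta^k \varphi}$ is defined just as $\widetilde\varphi$ was, using that $\dptheta^k\varphi$ still has no negative Fourier modes. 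This shows $\dptau^k \psi$ exists and is continuous on $\R_+\times \K_\rho$, so $\psi$ is $\nu$-times continuously differentiable in $\tau$.

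Finally, the bound itself is an application of the maximum modulus principle: for fixed $u$, the map $\xi\mapsto \widetilde{\dptheta^k \varphi}(\xi,u)$ is continuous on $\overline{D}$ and holomorphic on $D$, so
$$
|\widetilde{\dptheta^k \varphi}(\xi,u)| \;\leq\; \sup_{\theta\in\mathbb T} |\dptheta^k \varphi_\theta(u)| \;\leq\; \|\dptheta^k \varphi\|_{\mathbb T,\rho}
$$
for all $|\xi|\leq 1$. Specialising to $\xi = e^{-\tau}$ and taking the supremum over $(\tau,u)\in\R_+\times\K_\rho$ gives $\|\dptau^k \psi\|_\rho \leq \|\dptheta^k \varphi\|_{\mathbb T,\rho}$, which is exactly the claim. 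The main subtlety I expect is the termwise differentiation step: it must be justified uniformly enough that the identified $\dptau^k \psi$ is not only pointwise correct but also continuous up to $\tau = 0$, which is why the $(\nu+1)$-th (rather than merely $\nu$-th) differentiability assumption on $\varphi$ is used to secure normal convergence of the series for $\dptheta^k\varphi$ with $k\leq\nu$.
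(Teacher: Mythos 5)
Your proposal is correct and takes essentially the same route as the paper's proof: extend the non-negative-mode Fourier series to a map of $\xi$ on the closed unit disk, identify $\dptau^{k}\psi_{\tau}(u)=i^{k}\sum_{j\ge 0}e^{-j\tau}c_j\big(\dptheta^{k}\varphi\big)(u)$, and bound it through the maximum modulus principle by the supremum on $|\xi|=1$, i.e. by $\|\dptheta^{k}\varphi\|_{\mathbb{T},\rho}$. The only point to tighten is your justification of convergence: for $k=\nu$ (or for $\nu=0$) the integration-by-parts bound yields only $|c_j|\lesssim 1/|j|$, which is not summable, so you should instead invoke the standard fact that a $C^{1}$ periodic map has an absolutely convergent Fourier series (Cauchy--Schwarz combined with Parseval applied to $\dptheta^{k+1}\varphi$), which is precisely the ``well-known'' absolute convergence the paper relies on.
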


\begin{proof} 
It is well-known that the Fourier series of $\varphi$ 
and of its derivatives $\dptheta^{\: k} \varphi $ for $1 \leq k \leq \nu$ 
are absolutely convergent. 
Therefore $\psi_{\tau}(u)$ and $\dptau^{\: k} \psi_{\tau}(u)$ are well-defined for $(\tau,u)$ in $\R_+ \times \K_{\rho}$ by 
$$ \dptau^{\: k} \psi_{\tau}(u) 
= \sum_{j \geq 0} (-j)^{k} e^{-j\tau} c_j(\varphi)(u) 
= i^{\, k} \cdot \sum_{j \geq 0} e^{-j\tau} c_j\left( \dptheta^{\: k} \varphi \right)(u)
.
$$
The absolute convergence also ensures that analyticity w.r.t. $u$ is preserved, 
as an absolutely convergent series of holomorphic functions is holomorphic. 
We define $(\xi,u) \mapsto \zeta_{k}(\xi, u)$ the power series defined for all $\xi \in \C, |\xi| \leq 1$ 
and all~$u\in \K_{\rho}$ by 
$$ \zeta_{k}(\xi, u) = \sum_{j \geq 0} \xi^j c_j \left( \dptheta^{\: k} \varphi \right)(u) $$ 
such that $\dptau^{\: k} \psi_{\tau}(u) = i^{k}\, \zeta_{k}(e^{-\tau}, u)$. 
The maximum modulus principle ensures 
$$ \sup_{\tau \in \R_+} | \dptau^{\: k} \psi_{\tau}(u) | 
\leq \sup_{|\xi| \leq 1} | \zeta_{k}(\xi, u) | 
= \sup_{ |\xi| = 1} | \zeta_{k}(\xi, u) | 
\leq \| \dptheta^{\: k} \varphi \|_{\T, \rho} 
$$
which is the desired result.
\end{proof}

Using the lemma's notations and assumptions, since $\psi$ is $\nu$-times continuously differentiable, 
we may also define the norm 
\begin{equation} \label{approx_def:norm_dissip_p} 
\| \psi \|_{\rho, \nu} := \max_{0 \leq k \leq \nu } \| \dptau^{\: k} \psi\|_{\rho} 
\end{equation}
with $\| \cdot \|_{\rho}$ defined by~\eqref{approx_def:norm_dissip}. 
This result can now be applied to maps $\Omega^{[n]}$ and $\eta^{[n]}$, 
after checking that the Fourier coefficients of the shifted maps 
$\widetilde{\Phi}^{[n]}$ and $\widetilde{\delta}^{[n]}$ vanish for negative indices. 
The shift is given by~\eqref{approx_def:shift_operator}. 

\begin{theorem} \label{approx_thm:diff_cov_bounds}
For $n$ in $\N^*$, let us denote $r_n = R/n$ and $\eps_n := r_n/16 M$ 
with $R$ and $M$ defined in Property~\ref{approx_prop:assump_perio}. 
For all $\eps > 0$ such that $\eps \leq \eps_n$, 
the maps  $(\tau, u) \mapsto \Omega^{[n]}_{\tau}(u)$, 
$u \mapsto F^{[n]}(u)$ and $(\tau, u) \mapsto \eta^{[n]}_{\tau}(u)$ 
given by~\eqref{approx-eq:def_diff_var_chg} and~\eqref{approx-eq:diff_flow} 
are well-defined on $\R_+ \times \K_{R}$ and are analytic w.r.t. $u$. 
The change of variable $\Omega^{[n]}$ and the residue $\eta^{[n]}$ 
are both $(p+1)$-times continuously differentiable w.r.t. $\tau$. 
Moreover, with $\| \cdot \|_{R}$ and $\| \cdot \|_{R, p+1}$ given by~\eqref{approx_def:norm_dissip} and~\eqref{approx_def:norm_dissip_p}, 
the following bounds are satisfied for all $0 \leq \nu \leq p+1$, 
\begin{align*}
(i)\quad{}&{} 
\left\| \Omega^{[n]} - e^{-\tau\Lambda} \right\|_{R} \leq 4\eps M, 
& 
\quad(ii)\quad{}&{} 
\left\| \dptheta^{\:\nu}  \big[ \Omega^{[n]} - e^{-\tau \Lambda} \big] \right\|_{R} \leq 8 \big( 1+\vertiii{\Lambda} \big)^{\nu} \eps M \, \nu! 
\\
(iii)\quad{}&{} 
\| F^{[n]} \|_{R} \leq 2M 
& 
(iv)\quad{}&{} 
\| \eta^{[n]}_{\tau}(u) \|_{R,p+1} \leq 2 M \big( 1 + \vertiii{\Lambda} \big)^{p+1} \left( 2\mathcal Q_p \frac{\eps}{\eps_n} \right)^n 
\end{align*}
where $\vertiii{\cdot}$ is the induced norm from $\R^d$ to $\R^d$, 
and $\mathcal Q_p$ is a $p$-dependent constant. 
\end{theorem}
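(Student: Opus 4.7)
The plan is to use Lemma~\ref{approx-lemma:filt_bnds} to import the averaging estimates of Theorem~\ref{approx-thm:period_bounds} from the periodic setting into the dissipative setting. The hypothesis of that lemma requires the Fourier coefficients of negative index of $\widetilde{\Phi}^{[n]}$ and $\widetilde{\delta}^{[n]}$ to vanish, so the first order of business is to establish this structural property.

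I would prove by induction on $n$ that $\widetilde{\Phi}^{[n]}$ and $\widetilde{\delta}^{[n]}$ have only nonnegative Fourier modes in $\theta$. The base case $\Phi^{[0]} = \id$ is immediate since $\widetilde{\id}_\theta(u) = e^{i\theta\Lambda}u$ has frequencies in $\{\lambda_1,\dots,\lambda_d\} \subset \N$. For the inductive step, the key algebraic identity $e^{i\theta\Lambda} g_\theta(u) = -i\, f(e^{i\theta\Lambda}u)$ gives $e^{i\theta\Lambda}(g\circ\Phi^{[n]})_\theta(u) = -i\, f\big(\widetilde{\Phi}^{[n]}_\theta(u)\big)$, and Lemma~\ref{approx_lemma:commut_cj}, applied with $\varphi = \Phi^{[n]}$ (whose near-identity hypothesis is ensured by Theorem~\ref{approx-thm:period_bounds}(i)), shows this map has nonnegative modes. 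Combining with $e^{i\theta\Lambda}\dpu\Phi^{[n]} = \dpu\widetilde{\Phi}^{[n]}$ yields nonnegative modes for $\widetilde{T(\Phi^{[n]})}$, and since $\langle T(\Phi^{[n]})\rangle = 0$ the integration in~\eqref{approx-eq:periodic_iter} preserves the property (the $k$-th Fourier coefficient is simply divided by $ik$). A parallel argument starting from the definition~\eqref{approx_eq:def_G_delta} of $\delta^{[n]}$ handles $\widetilde{\delta}^{[n]}$.

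With the Fourier-support property in hand, Lemma~\ref{approx-lemma:filt_bnds} immediately gives that $\Omega^{[n]}$ and $\eta^{[n]}$ are well-defined on $\R_+ \times \K_R$, analytic in $u$ and $(p+1)$-times continuously differentiable in $\tau$. For bound (i), I split off the identity contribution: the Fourier series of $\widetilde{\id}$ reproduces exactly $e^{-\tau\Lambda}u$, so $\Omega^{[n]} - e^{-\tau\Lambda}$ corresponds under the lemma to $e^{i\theta\Lambda}(\Phi^{[n]} - \id)$, which gives
\[
\big\| \Omega^{[n]} - e^{-\tau\Lambda} \big\|_R
\;\le\; \big\| e^{i\theta\Lambda}(\Phi^{[n]} - \id) \big\|_{\T,R}
\;=\; \big\| \Phi^{[n]} - \id \big\|_{\T,R} \le 4\eps M,
\]
using that $e^{i\theta\Lambda}$ is unitary for real $\theta$. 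Bound (iii) is immediate from $F^{[n]} = iG^{[n]}$ and Theorem~\ref{approx-thm:period_bounds}(iii). For (ii) and (iv), the extra $(1+\vertiii{\Lambda})^\nu$ factor enters through Leibniz' rule applied to $\dptheta^\nu\big[e^{i\theta\Lambda}\varphi\big] = \sum_{k=0}^\nu \binom{\nu}{k}(i\Lambda)^k e^{i\theta\Lambda}\dptheta^{\nu-k}\varphi$: each resulting term is bounded using Theorem~\ref{approx-thm:period_bounds}(ii) (resp.~(iv)), and summing with the elementary inequality $\sum_k \binom{\nu}{k}(\nu-k)!\,\vertiii{\Lambda}^k \le \nu!\,(1+\vertiii{\Lambda})^\nu$ yields the stated bounds.

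The main obstacle is the inductive Fourier-support property. Each step is conceptually simple, but it crucially relies on the compatibility of the shift $e^{i\theta\Lambda}$ with both the averaging operator $T$ and the nonlinear composition with $f$, the latter being precisely the content of Lemma~\ref{approx_lemma:commut_cj}. One must also verify at each iteration that the near-identity bound from Theorem~\ref{approx-thm:period_bounds}(i) places $\widetilde{\Phi}^{[n]}(u)$ inside the domain on which $f$ is analytic, so that Lemma~\ref{approx_lemma:commut_cj} is genuinely applicable.
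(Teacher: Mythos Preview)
Your proposal is correct and follows essentially the same strategy as the paper: prove by induction that the shifted maps $\widetilde{\Phi}^{[n]}$ and $\widetilde{\delta}^{[n]}$ have only nonnegative Fourier modes (using Lemma~\ref{approx_lemma:commut_cj} for the nonlinear composition), then invoke Lemma~\ref{approx-lemma:filt_bnds} together with the Leibniz identity for $\dptheta^{\nu}[e^{i\theta\Lambda}\varphi]$ to transfer the periodic bounds of Theorem~\ref{approx-thm:period_bounds} to the dissipative setting. The only cosmetic difference is that the paper handles the integral term in~\eqref{approx-eq:periodic_iter} by rewriting it as a convolution $\int_0^\theta e^{i(\theta-s)\Lambda}\widetilde{T(\Phi^{[k]})}_s\,\D s$, whereas you argue directly that integration divides Fourier coefficients by $ij$ and hence preserves the mode support; both formulations are equivalent.
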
 

\begin{proof}
We show by induction that $\widetilde{\Phi}^{[n]}_{\theta}(u)$ and $\widetilde{\delta}^{[n]}_{\theta}(u)$ only have non-negative Fourier modes. 
To start off the induction, notice $ \Phi^{[0]} = \id$, therefore $\widetilde{\Phi}^{[0]}_{\theta}(u) = e^{i\theta\Lambda}u$. 
Since $\Lambda$ only has coefficients in $\N$, only positive modes are generated. 
Assuming for $0 \leq k < n$ that $\widetilde{\Phi}^{[k]}$ 
has vanishing Fourier coefficients for negative indices, 
let us prove that $\widetilde{\Phi}^{[k+1]}$ does as well.
By definition~\eqref{approx-eq:periodic_iter}, 
$$ \widetilde{\Phi}^{[k+1]}_{\theta}(u) 
= e^{i\theta\Lambda}u 
+ \eps \int_0^{\theta} e^{i\theta\Lambda} T(\Phi^{[k]})_{\sigma}(u) \D\sigma 
- \eps e^{i\theta\Lambda} \left\langle \int_0^{\bullet} T(\Phi^{[k]})_{\sigma}(u) \D\sigma \right\rangle $$
from which we gather that the only problematic term in the definition of $\widetilde{\Phi}^{[k+1]}$ is the integral 
$$ \int_0^{\theta} e^{i\theta\Lambda} T(\Phi^{[k]})_s \D s 
= \int_0^{\theta} e^{i(\theta - s) \Lambda} \tilde{ T(\Phi^{[k]}) }_s \D s
= \int_0^{\theta} e^{i(\theta - s) \Lambda} \left(-i\, f\circ \widetilde{\Phi}_s^{[k]} + \dpu \widetilde{\Phi}_s^{[k]} \cdot G^{[k]} \right) \D s
$$
where we used $ e^{i\theta\Lambda} g_{\theta}\circ \Phi^{[k]}_{\theta} = -i\, f \circ \widetilde{\Phi}^{[k]}_{\theta} $. 
The convolution product of a periodic map $\theta \mapsto \varphi_{\theta}$ with $\theta \mapsto e^{i \theta \Lambda}$ 
generates only one new, nonnegative mode, which is $\theta \mapsto e^{i\theta \Lambda}$. 
By assumption and Theorem~\ref{approx-thm:period_bounds}, 
Lemma~\ref{approx_lemma:commut_cj} is applicable, 
therefore $f \circ \tilde{\Phi}^{[k]}$ only involves nonnegative modes. 
In turn, the same goes for $\tilde{T(\Phi^{[k]})}$ and then for $\tilde{\Phi}^{[k+1]}$. 

This being true, Lemma~\ref{approx-lemma:filt_bnds} is applicable, producing the desired bounds directly. 
The only relationship needed is
$$ \| \dptheta^{\:\nu} \widetilde{\varphi} \|_{\rho} 
= \left\| \sum_{q = 0}^{\nu} \among{\nu}{q} (i\Lambda)^{q} e^{i\theta\Lambda} \ \dptheta^{\:\nu-q} \varphi \right\|_{\rho}
\leq (1+\vertiii{\Lambda} )^\nu \| \varphi \|_{\rho,\nu} . $$
\end{proof}

\section{The micro-macro paradigm} \label{sec:mima}

In this section, we start by denoting $v^{[n]}(t) := \Gamma_t^{[n]} \circ \big( \Omega^{[n]}_0 \big)^{-1}(u_0) $ 
and inject the decomposition 
\begin{equation} \label{mima_eq:diff_decomp}
u\eeps(t) = \Omega_{t/\eps}^{[n]} \big( v^{[n]}(t) \big) + w^{[n]}(t) 
\end{equation}
into the original problem~\eqref{intro-pb:full_pb_on_u} 
in order to find a system on $v^{[n]}$ and $w^{[n]}$. 
The idea of the decomposition is that $v^{[n]}$ and $w^{[n]}$ are not stiff 
and can therefore be computed with \textit{uniform accuracy}, 
i.e. the numerical error is independent of $\eps$. 

With definition~\eqref{mima_eq:diff_decomp}, $w^{[n]}$ is of size $\bigO(\eps^{n+1})$ 
and its derivatives are bounded uniformly up to order~$n+1$. 
This demonstration is the subject of Subsection~\ref{sec:mima:subsec:pb}. 
In Subsection~\ref{sec:mima:subsec:ua}, we prove that using explicit exponential Runge-Kutta of order $n+1$ 
to compute $v^{[n]}$ and $w^{[n]}$ generates an error of uniform order $n+1$ on~$u\eeps$ 
as defined by~\eqref{intro_eq:unif_error}. 

\subsection{Definition and properties of the micro-macro problem}
\label{sec:mima:subsec:pb}

From decomposition~\eqref{mima_eq:diff_decomp} 
we obtain the following system 
$$
\renewcommand{\arraystretch}{1.4}
\left \{
\begin{array}{ll}
\dpt v^{[n]}(t) = F^{[n]}(v^{[n]}) ,
\\ \displaystyle
\dpt w^{[n]}(t) = 
- \inveps \Lambda \left( \Omega^{[n]}_{t/\eps}(v^{[n]}) + w^{[n]} \right) 
+ f\left( \Omega^{[n]}_{t/\eps}(v^{[n]}) + w^{[n]} \right) 
- \frac{\D}{\D t} \Omega_{t/\eps}^{[n]} (v^{[n]} ), 
\end{array}
\right .
$$
with initial conditions $v^{[n]}(0) = \left( \Omega^{[n]}_0 \right)^{-1}(u_0)$ and $w^{[n]}(0) = 0$. 
By definition of $v^{[n]}$ and using identity~\eqref{approx_eq:eta_defect},
\begin{align*} 
\frac{\D}{\D t} \Omega_{t/\eps}^{[n]}(v^{[n]}(t)) {}&{}
= \inveps \dptau \Omega_{t/\eps}^{[n]}(v^{[n]}) + \dpu \Omega_{t/\eps}^{[n]}(v^{[n]}) \cdot F^{[n]}(v^{[n]})
\\ {}&{}
= -\inveps \Lambda \Omega^{[n]}_{t/\eps}(v^{[n]}) + \eta^{[n]}_{t/\eps}(v^{[n]}) + f\big( \Omega^{[n]}_{t/\eps}(v^{[n]}) \big) .
\end{align*}
We finally get the micro-macro problem 
\begin{subequations} \label{mima_pb:mima_f_eta}
\renewcommand{\arraystretch}{1.4}
\begin{empheq}[left=\empheqlbrace]{align}
&\dpt v^{[n]}(t) = F^{[n]}(v^{[n]}) ,
\label{mima_pb:mima_f_eta_v}
\\ \displaystyle
&\dpt w^{[n]}(t) = - \inveps \Lambda w^{[n]} 
+ f\left( \Omega^{[n]}_{t/\eps}(v^{[n]}) + w^{[n]} \right) 
- f\left( \Omega^{[n]}_{t/\eps}(v^{[n]}) \right) 
- \eta^{[n]}_{t/\eps}(v^{[n]}).
\label{mima_pb:mima_f_eta_w}
\end{empheq}
\end{subequations}
with initial conditions $v^{[n]}(0) = \big( \Omega^{[n]}_0 \big) ^{-1}(u_0),$ $w^{[n]}(0) = 0$. 
Assuming that the macro equation~\eqref{mima_pb:mima_f_eta_v} is well-posed, this can be written in a more convenient form, 
\begin{equation} \label{mima_pb:mima_L_S}
\left\{ \renewcommand{\arraystretch}{1.3} \begin{array}{ll}
\dpt v^{[n]} = F^{[n]}(v^{[n]}),
\\ \displaystyle
\dpt w^{[n]} = -\inveps \Lambda w^{[n]} + L^{[n]}(t/\eps, t, w^{[n]})w^{[n]} + S^{[n]}(t/\eps, t),
\end{array} \right. 
\end{equation} 
where $L^{[n]}(\tau,t,w)\: w = f\left(\Omega^{[n]}_{\tau}\circ v^{[n]}(t) + w \right) - f \left( \Omega^{[n]}_{\tau} \circ v^{[n]}(t) \right)$ i.e.
\begin{align*} 
&
L^{[n]}(\tau, t, w) = \int_0^1 \dpu f \left( \Omega^{[n]}_{\tau} \circ v^{[n]}(t) + \mu w \right) \D \mu ,
\\
\text{and}\qquad&
S^{[n]}(\tau,t) = -\eta^{[n]}_{\tau}(v^{[n]}(t)) .
\qquad\qquad
\end{align*}
After showing that problem~\eqref{mima_pb:mima_f_eta_v} is well-posed, 
we shall use both formulations~\eqref{mima_pb:mima_f_eta} and~\eqref{mima_pb:mima_L_S} 
interchangeably depending on the context.

\begin{theorem} \label{mima_thm:well_posedness}
For all $n\in \N^*$, let us define $r_n = R/n$ and $\eps_n := r_n/16 M$, 
with $R$ and $M$ given in~\eqref{approx_def:cst_M}. 
For all $\eps \leq \eps_n$, 
Problem~\eqref{mima_pb:mima_f_eta} is well-posed until some final time $T_n$ independent of $\eps$, 
and the following bounds are satisfied for all $t \in [0,T_n]$ and $0 \leq \nu \leq \min(n,p)$,
\begin{align*}
(i)\quad& 
v^{[n]}(t) \in \K_{R} &
(ii)\quad& 
|w^{[n]}(t)| \leq \frac{R}{4}\left( \frac{\eps}{\eps_n} \right)^{n+1}
\\
(iii)\quad&
|\dpt^{\:\nu} E^{[n]}(t) | = \bigO(\eps^{n-\nu})  &
(iv)\quad&
\| \dpt^{\nu+1} E^{[n]} \|_{L^1} = \bigO( \eps^{n-\nu} )
\end{align*}
where $E^{[n]} = \dpt w^{[n]} + \inveps \Lambda w^{[n]}$.
\end{theorem}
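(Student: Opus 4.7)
The plan is to address the four claims in turn, relying on Theorem~\ref{approx_thm:diff_cov_bounds} and on the structural observation, established in its proof, that $\widetilde{\Phi}^{[n]}$ and $\widetilde{\delta}^{[n]}$ have only non-negative Fourier modes, together with $\langle\delta^{[n]}\rangle=0$.

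For (i), I first note that $\Omega^{[n]}_0 = \Phi^{[n]}_0$: the series $\sum_{j\geq 0}e^{-j\tau}c_j(\widetilde{\Phi}^{[n]})$ collapses at $\tau=0$ to $\widetilde{\Phi}^{[n]}_0 = \Phi^{[n]}_0$. By Theorem~\ref{approx-thm:period_bounds}, $\Phi^{[n]}_0$ has an analytic inverse on $\K_{R/4}$, so $v^{[n]}(0) = (\Omega^{[n]}_0)^{-1}(u_0)$ is well-defined and close to $u_0\in\K_0$. With $\|F^{[n]}\|_R\leq 2M$ from the same theorem, Cauchy--Lipschitz then delivers a solution of~\eqref{mima_pb:mima_f_eta_v} on an interval $[0,T_n]$ of length $T_n\sim R/(4M)$ independent of $\eps$, staying in $\K_R$ throughout.

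For (ii), I recast~\eqref{mima_pb:mima_L_S} in mild form through Duhamel, $w^{[n]}(t) = \int_0^t e^{-(t-s)\Lambda/\eps}\bigl[L^{[n]}w^{[n]} + S^{[n]}\bigr]\D s$, using that the semigroup is a contraction since $\Lambda$ is diagonal non-negative. The crux is to bound $\int_0^t e^{-(t-s)\Lambda/\eps}S^{[n]}(s/\eps,s)\D s$ by $\bigO(\eps^{n+1})$. Expanding $S^{[n]} = -\eta^{[n]} = -i\sum_{j\geq 0}e^{-js/\eps}c_j(\widetilde{\delta}^{[n]})(v^{[n]}(s))$ and factoring $e^{-t\lambda_\alpha/\eps}$ on the $\alpha$-th block, one encounters integrals $\int_0^t e^{(\lambda_\alpha-j)s/\eps}\phi_{j,\alpha}(s)\D s$. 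On the $x$-block ($\lambda_\alpha=0$) the mode $j=0$ is absent because $c_0(\delta^{[n]})=0$; on the $z$-block the otherwise resonant mode $j=\lambda_\alpha$ is annihilated since $c_{\lambda_\alpha}(\widetilde{\delta}^{[n]}_\alpha) = c_{0}(\delta^{[n]}_\alpha) = 0$. Every remaining mode contributes $\bigO(\eps)$, either via semigroup damping (the $j=0$ mode on the $z$-block, yielding $\eps/\lambda_\alpha$) or via one integration by parts (the non-resonant oscillatory modes, yielding $\eps/|\lambda_\alpha-j|$). Combined with $\|\eta^{[n]}\|_{R,p+1}\lesssim(\eps/\eps_n)^n$ from Theorem~\ref{approx_thm:diff_cov_bounds}(iv) and the absolute convergence of the mode sum (ensured by the $\theta$-regularity of $\widetilde{\delta}^{[n]}$), this yields $\bigO(\eps^{n+1})$, and a Gronwall argument on the $L^{[n]}w^{[n]}$ contribution then closes the bound $|w^{[n]}(t)|\leq(R/4)(\eps/\eps_n)^{n+1}$, after possibly shrinking $T_n$.

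For (iii)--(iv), write $E^{[n]} = L^{[n]}w^{[n]} + S^{[n]}$. The first summand is $\bigO(\eps^{n+1})$ by~(ii); for the second, the chain rule on $s\mapsto\eta^{[n]}_{s/\eps}(v^{[n]}(s))$ produces at order $\nu$ a leading term $\eps^{-\nu}\dptau^{\nu}\eta^{[n]}$ plus lower-order pieces, which Theorem~\ref{approx_thm:diff_cov_bounds}(iv) bounds by $\bigO(\eps^{n-\nu})$ for $\nu\leq p$, giving~(iii). The $L^1$ refinement~(iv) follows because integration of the top-order term $\eps^{-(\nu+1)}\dptau^{\nu+1}\eta^{[n]}(s/\eps,\cdot)$ against $\D s$ recovers an extra factor $\eps$ by the same mechanism as in step~(ii) (the integrand is controlled componentwise through the semigroup and the vanishing resonant coefficients), yielding $\|\dpt^{\nu+1}E^{[n]}\|_{L^1}=\bigO(\eps^{n-\nu})$. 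The delicate point throughout is step~(ii): one must carefully juggle the semigroup $e^{-(t-s)\Lambda/\eps}$, the Fourier-mode structure of $\eta^{[n]}$, and the combinatorial fact that every mode combination threatening the $\bigO(\eps^{n+1})$ bound is precisely the one forced to vanish by $\langle\delta^{[n]}\rangle=0$ together with the non-negativity of modes of $\widetilde{\delta}^{[n]}$ -- a point where the integrality of the coefficients of $A$ enters essentially.
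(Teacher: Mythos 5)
Your proposal follows essentially the same route as the paper's proof: well-posedness of the macro equation from $\|F^{[n]}\|_{R}\le 2M$ and invertibility of $\Omega^{[n]}_0=\Phi^{[n]}_0$ giving an $\eps$-independent $T_n$; a Duhamel--Gronwall argument for $w^{[n]}$ whose key source estimate expands $\eta^{[n]}$ in modes, uses $c_0(\delta^{[n]})=\langle\delta^{[n]}\rangle=0$ to kill the resonant contribution and the exponential damping to gain one factor of $\eps$ on each remaining mode, combined with the defect bound $(\eps/\eps_n)^n$; and chain-rule estimates on $E^{[n]}=L^{[n]}w^{[n]}+S^{[n]}$ for (iii)--(iv) with the extra $\eps$ in $L^1$ coming from integrating the decaying exponentials in $t/\eps$. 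The only cosmetic differences are that the paper obtains $v^{[n]}(0)$ via a Brouwer fixed-point argument rather than quoting invertibility, and that the non-resonant modes are real (not oscillatory) exponentials, so the paper bounds them by direct integration of the positive kernels against the damping prefactor, making your integration by parts unnecessary.
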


\begin{proof}
This proof is in several parts: first we show that problem~\eqref{mima_pb:mima_f_eta_v} is well-posed, 
and use this result to show that the bound on $w^{[n]}$ is satisfied, 
thereby also proving that~\eqref{mima_pb:mima_f_eta_w} is well-posed. 
Finally we focus on the bounds on $E^{[n]}$. 

Let us set $\varphi(v) = u_0 + v - \Omega^{[n]}_0(u_0 + v)$. 
Using Theorem~\ref{approx_thm:diff_cov_bounds}, if $|v| \leq R/4$ then $|\varphi(v)| \leq R/4$. 
By Brouwer fixed-point theorem, there exists $v^*$ such that $\varphi(v^*) = v^*$, 
i.e. $u^* \in \K_{R/4}$ such that $\Omega^{[n]}_0(u^*) = u_0$. 
Therefore $v^{[n]}(0) := u^* \in \K_{R/4}$. 

Given $t > 0$ and assuming $v^{[n]}(s) \in \K_{R}$ for all $s \in [0,t]$, 
one can bound $v^{[n]}(t)$ using Theorem~\ref{approx_thm:diff_cov_bounds}:  
$$ \left| v^{[n]}(t) - v^{[n]}(0) \right| = \left| \int_0^{t} F^{[n]}\left(v^{[n]}(s) \right) \D s \right| \leq 2Mt . $$
Setting $T_v := \dfrac{3R}{8M}$ ensures $\left| v^{[n]}(t) - v^{[n]}(0) \right| \leq 3R/4$, meaning that for all $t\in [0,T_v]$, $v^{[n]}(t)$ exists and is in $\K_{R}$.
Again from Theorem~\ref{approx_thm:diff_cov_bounds}, we deduce $\Omega_{\tau}^{[n]} \big( v^{[n]}(t) \big) \in \K_{5R/4}$.

Focusing now on $w^{[n]}$ and assuming for all $s\in [0,t],\ |w^{[n]}(s)| \leq R/4$, 
the linear term $L^{[n]}\big( \tau, s, w^{[n]}(s) \big)$ is bounded using a Cauchy estimate: 
$$ 
\left| L^{[n]} \big( \tau, s, w^{[n]}(s) \big) \right| \leq \| \dpu f \|_{3R/2} 
\leq \frac{ \| f \|_{2R} }{2R - \frac32 R} 
\leq \frac{2M}{R} 
$$
using a Cauchy estimate. 
The integral form then gives the bounds 
\begin{align}
\left| w^{[n]}(t) \right| \leq{}&{} 
	\left| \int_0^t e^{\frac{s-t}{\eps}\Lambda} L^{[n]} \big(s/\eps, s, w^{[n]}(s) \big) w^{[n]}(s) \D s 
			+ \int_0^t e^{\frac{s-t}{\eps}\Lambda} S^{[n]}(s/\eps, s) \D s \right| 
\notag
\\
\leq{}&{}
	\int_0^t \frac{2M}{R} \left| w^{[n]}(s) \right| \D s + \left| \int_0^t e^{\frac{s-t}{\eps}\Lambda} S^{[n]}(s/\eps, s) \D s \right| 
\label{mima_eq:bnd_w}
\end{align}
We compute for each component separately, using~\eqref{approx-eq:def_diff_var_chg} the definition of $\eta^{[n]}$ 
and~\eqref{approx_def:shift_operator} the definition of the shift operator, 
writing $\Lambda = \text{Diag}(\lambda_1, \ldots, \lambda_d)$, 
$$ 
\eta^{[n]}_{\tau}(u) 
= \sum_{j \geq 0} e^{-j\tau} c_j \left( \tilde{\delta}^{[n]} \right) (u) 
= \left( \sum_{j \geq - \lambda_k} e^{-(j+\lambda_k) \tau} c_j \left( \delta^{[n]} \right)(u)_k \right) _{1 \leq k \leq d}
$$ 
from which we get, using $c_0(\delta^{[n]}) = \langle \delta^{[n]} \rangle = 0$, 
\begin{align*}
\left| \int_0^t e^{\frac{s-t}{\eps}\Lambda} S^{[n]}(s/\eps, s) \D s \right| 
&= \left| \left( e^{-\lambda_k \frac{t}{\eps}} \sum_{\substack{j + \lambda_k \geq 0 \\ j \neq 0}} \int_0^t e^{-j\frac{s}{\eps}} 
c_j \!\left( \delta^{[n]} \right)\! (v^{[n]}(s))_k \D s
\right)_{1 \leq k \leq d} \right|
\\
&\leq \left| \left( 
\sum_{\substack{ j + \lambda_k \geq 0 \\ j \neq 0}} 
\eps e^{-\lambda_k \frac{t}{\eps}} \left| \frac{1 - e^{-j\frac{t}{\eps}}}{j} \right| \cdot \sup_{u \in \K_R} | c_j(\delta^{[n]})(u)_k |
\right)_{1 \leq k \leq d} \right|
\\
& \leq \eps \cdot \left| \sum_{j \in \mathbb Z^*} \left( \sup_{u \in \K_R} 
\left| \frac{1}{j}\ c_j(\delta^{[n]})(u)_k \right| \right)_{ 1 \leq k \leq d} \right | 
\leq \eps \cdot C \| \delta^{[n]} \|_{\T, R, 1} 
\end{align*}
for some constant $C > 0$ and where~$\| \cdot \|_{\T, R, 1}$ is given by~\eqref{approx_eq:def_norms_perio}. 
We go from the first to the second inequality by bounding the difference of exponentials by $1$. 
Using Theorem~\ref{approx-thm:period_bounds}, 
there exists a constant $M_n > 0$ such that for all $t \in [0, T_v]$, 
\begin{equation}
\left| \int_0^t e^{\frac{s-t}{\eps}\Lambda} S^{[n]}(s/\eps, s) \D s \right| \leq M_n \left( \frac{\eps}{\eps_n} \right)^{n+1} . 
\end{equation}
Using Gronwall's lemma in~\eqref{mima_eq:bnd_w} with this inequality yields 
$$ 
| w^{[n]}(t) | \leq M_n\, e^{\frac{2M}{R} t} \left( \frac{\eps}{\eps_n} \right)^{n+1} \leq M_n\, e^{\frac{2M}{R} t} . 
$$
We now set $T_w > 0$ such that $M_n\, e^{\frac{2M}{R} T_w} \leq R/4$ 
($T_w$ may therefore depend on $n$, but does not depend on $\eps$) and 
$$ T_n = \min ( T_v, T_w ). $$
This ensures the well-posedness of the solution of~\eqref{mima_pb:mima_L_S} on $[0,T_n]$ 
as well as the size of $w^{[n]}$.

Finally, the results on $E^{[n]}$ are a direct consequence of the bounds on the linear term 
$$ 
\sup_{\alpha+\beta+\gamma \leq p+1} \| \dptau^{\alpha} \dpt^{\beta} \dpu^{\gamma} L^{[n]} \| < + \infty 
$$ 
and on the source term 
$$ 
\sup_{0 \leq \alpha + \beta \leq p} \| \dptau^{\alpha} \dpt^{\beta} S^{[n]} \|_{L^{\infty}} = \bigO(\eps^n) , 
\qquad 
\sup_{\substack{\beta \geq 1 \\ 1 \leq \alpha + \beta \leq p+1}} \| \dptau^{\alpha} \dpt^{\beta} S^{[n]} \|_{L^1} = \bigO( \eps^{n+1} ) . 
$$
This stems directly from Cauchy estimates and Theorem~\ref{approx_thm:diff_cov_bounds}. 

\end{proof}

\begin{remark} \label{mima_rq:init_cond} 
So far we have not discussed how to compute the initial condition $v^{[n]}(0)$. 
Setting $\eps \phi^{[n]} = \Phi_0^{[n]} -\id$ and $v_k = v^{[n]}(0)$, 
by definition $\Omega^{[n]}_0(v_n) = \Phi^{[n]}_0(v_n) = u_0$, 
therefore using $v_n = v_{n-1} + \bigO(\eps^n)$ and $\phi^{[n]} = \phi^{[n-1]} + \bigO(\eps^n)$ (see~\cite{cite:mic_mac} for details), 
it is easy to show 
\begin{equation*}
v_n = u_0 - \eps \phi^{[n]}(v_{n-1}) + \bigO(\eps^{n+1}) . 
\end{equation*}
We can now define approached initial conditions for problem~\eqref{mima_pb:mima_L_S} iteratively 
\begin{equation} \label{mima_eq:mima_approx_init}
v_0 = u_0, \qquad v_{n+1} = u_0 - \eps \phi^{[n+1]} \big( v_{k} \big), 
\quad\text{and}\quad
w^{[n]}(0) = u_0 - \Omega^{[n]}_0 \big( v_{n} \big)
\end{equation}
which ensures $w^{[n]}(0) = \bigO(\eps^{n+1})$, meaning our previous results are not jeopardised. 
\end{remark}

\subsection{Uniform accuracy of numerical schemes}
\label{sec:mima:subsec:ua} 
\newcommand{\modnorm}[1]{\left\vert #1 \right\vert _{\eps} }

Using a classic scheme to solve Problem~\eqref{mima_pb:mima_f_eta} 
cannot work due to the term $\frac{1}{\eps}\Lambda w^{[n]}$. 
This is why we focus on exponential schemes, which render this term non-problematic (see~\cite{maset2009unconditional}). 
Furthermore, for these schemes the error bound involves the "modified" norm 
\begin{equation} \label{mima_def:modnorm}
\modnorm{u} = \left | u + \inveps \Lambda u \right | . 
\end{equation}
This norm is interesting because after a short time $t \geq \eps \log(1/\eps)$, 
the $z$-component of the solution $u\eeps$ of~\eqref{intro-pb:full_pb_on_u} is of size~$\eps$, as evidenced by~\eqref{approx_eq:cent_manifold}. 
Using the norm $ \modnorm{\, \cdot \, } $ somewhat rescales $ z\eeps $ (but not $x\eeps$) by $\eps^{-1}$ 
such that studying the error in this norm can be seen as a sort of "relative" error. 
The following theorem uses known results on exponential Runge-Kutta schemes 
which can be found for instance in~\cite{cite:exp_erk_schm, cite:exp_erk}. 

\begin{theorem} \label{mima_thm:uniform}
Under the assumptions of Theorem~\ref{mima_thm:well_posedness} 
and denoting $T_n \leq T$ a final time such that problem~\eqref{mima_pb:mima_f_eta} is well-posed on $[0, T_n]$.  
Given $(t_i)_{i\in [\![0,N ]\!]}$ a discretisation of $[0,T_n]$ of time-step $\Dt := \max_i |\, t_{i+1} - t_i \, |$. 
computing an approximate solution $(v_i, w_i)$ of~\eqref{mima_pb:mima_f_eta} 
using an exponential Runge-Kutta scheme of order $q := \min(n,p)+1$ yields a \textit{uniform} error of order $q$, i.e. 
\begin{equation} \label{mima_eq:unif_abs_err}
\max_{0 \leq i \leq N} \modnorm{ u\eeps(t_i) - \Omega^{[n]}_{t_i/\eps}(v_i) - w_i } \leq C \Dt ^{q} 
\end{equation}
where $C$ is independent of $\eps$. 
\end{theorem}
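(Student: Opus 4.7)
The plan is to exploit the exact identity $u\eeps(t_i) = \Omega^{[n]}_{t_i/\eps}(v^{[n]}(t_i)) + w^{[n]}(t_i)$, which is how $v^{[n]}$ and $w^{[n]}$ are defined via~\eqref{mima_eq:diff_decomp}. Applying the triangle inequality in $\modnorm{\cdot}$ reduces the total error to a \emph{macro contribution} $\modnorm{\Omega^{[n]}_{t_i/\eps}(v^{[n]}(t_i)) - \Omega^{[n]}_{t_i/\eps}(v_i)}$ and a \emph{micro contribution} $\modnorm{w^{[n]}(t_i) - w_i}$, each of which I would bound by $C \Dt^q$ with $C$ independent of $\eps$. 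The main idea is that both $v^{[n]}$ and $w^{[n]}$ satisfy equations that are either genuinely non-stiff or properly exponentially preconditioned, so standard and exponential Runge-Kutta theory (as reviewed in~\cite{cite:exp_erk_schm, cite:exp_erk}) applies uniformly in $\eps$, provided appropriate derivatives of the data are uniformly bounded --- which is precisely the content of Theorem~\ref{mima_thm:well_posedness}.

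For the macro part, equation~\eqref{mima_pb:mima_f_eta_v} has a smooth, $\eps$-independent vector field $F^{[n]}$ by Theorem~\ref{approx_thm:diff_cov_bounds}, so an exponential RK scheme of order $q$ reduces here to an explicit RK scheme and standard Butcher theory yields $\max_i |v^{[n]}(t_i) - v_i| \leq C \Dt^q$. I would then transfer this bound to the modified norm via the Lipschitz estimates on $\Omega^{[n]}$ coming from bounds $(i)$ and $(ii)$ of Theorem~\ref{approx_thm:diff_cov_bounds}, using that $\Omega^{[n]}$ is an $\bigO(\eps)$-perturbation of $e^{-\tau \Lambda}$ whose differential in $u$ contracts the $z$-components in $\modnorm{\cdot}$ uniformly in $\tau$.

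For the micro part, equation~\eqref{mima_pb:mima_f_eta_w} is exactly of the form for which exponential RK schemes are designed: the stiff linear flow generated by $-\inveps \Lambda$ is integrated exactly, leaving a non-stiff semilinear remainder $L^{[n]}(t/\eps,t,w)\,w + S^{[n]}(t/\eps,t)$ whose partial derivatives are bounded uniformly in $\eps$ (Cauchy estimates on $f$ and the bounds of Theorem~\ref{approx_thm:diff_cov_bounds}). The classical convergence theorem for an order-$q$ exponential RK scheme expresses the local truncation error in terms of sup and $L^1$ norms of derivatives $\dpt^{\nu} E^{[n]}$ with $E^{[n]} = \dpt w^{[n]} + \inveps \Lambda w^{[n]}$ up to $\nu = q-1 = \min(n,p)$, and these are precisely what bounds $(iii)$ and $(iv)$ of Theorem~\ref{mima_thm:well_posedness} control. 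A discrete Gronwall argument then accumulates the local errors globally, absorbing the extra perturbation induced by replacing $v^{[n]}(t_i)$ by $v_i$ inside $L^{[n]}$ and $S^{[n]}$, which is itself $\bigO(\Dt^q)$ by the macro estimate and the Lipschitz continuity of $f$ and $\eta^{[n]}$.

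The main obstacle will be the systematic use of the modified norm $\modnorm{\cdot}$ throughout. In the Euclidian norm a uniform bound $|w^{[n]}(t_i) - w_i| \leq C \Dt^q$ would be essentially routine, but the factor $\inveps \Lambda$ in $\modnorm{\cdot}$ threatens to inflate the $z$-components of the error by $\eps^{-1}$. The compensation must come from the extra power of $\eps$ that Theorem~\ref{mima_thm:well_posedness}$(iv)$ gives on the $L^1$ norms of the top derivatives of $E^{[n]}$ compared with the $L^\infty$ ones in $(iii)$. Making this compensation rigorous --- by tracking the discrete variation-of-constants formula for the exponential RK scheme in $\modnorm{\cdot}$ rather than $|\cdot|$, and verifying that the quadrature remainders appear precisely in the form $\int \dpt^{q} E^{[n]}$ amenable to the $L^1$ bound --- is the technical heart of the argument, and is exactly why the statement requires $q = \min(n,p)+1$ and why Theorem~\ref{mima_thm:well_posedness} distinguishes $L^\infty$ from $L^1$ control of derivatives of $E^{[n]}$.
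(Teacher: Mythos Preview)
Your proposal is correct and follows essentially the same route as the paper: split via the triangle inequality into a macro term handled by standard RK convergence for the non-stiff equation~\eqref{mima_pb:mima_f_eta_v} and then pushed through~$\Omega^{[n]}$ using $\Omega^{[n]}_\tau = e^{-\tau\Lambda} + \bigO(\eps)$ together with the boundedness of $\tau\Lambda e^{-\tau\Lambda}$, and a micro term controlled in $\modnorm{\cdot}$ by the exponential RK estimate from~\cite{cite:exp_erk} in the form $\modnorm{w^{[n]}(t_i)-w_i}\leq C\Dt^q\big(\|\dpt^{q-1}E^{[n]}\|_{\infty}+\|\dpt^{q}E^{[n]}\|_{L^1}\big)$, combined with Theorem~\ref{mima_thm:well_posedness}(iii)--(iv). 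Your extra care about the coupling (using $v_i$ instead of $v^{[n]}(t_i)$ in the source for $w$) and about why the $L^1$ bound absorbs the $\eps^{-1}$ in $\modnorm{\cdot}$ is welcome and not spelled out in the paper, but it does not change the argument.
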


The left-hand side of this inequality involves $\modnorm{ \, \cdot \, }$ 
and shall be called the modified error. 
It dominates the absolute error which uses $| \cdot |$. 

\begin{proof}
The idea in this proof is to bound the errors on the macro part and micro part separately, using 
$$
\modnorm{ u\eeps(t_i) - \Omega^{[n]}_{t_i/\eps}(v_i) - w_i } 
\leq \modnorm{ \Omega^{[n]}_{t_i/\eps} \left( v^{[n]}(t_i) \right) - \Omega^{[n]}_{t_i/\eps}(v_i) } 
+ \modnorm{ w^{[n]}(t_i) - w_i } . 
$$

As the macro part $v^{[n]}$ involves no linear term, 
the scheme acts like any RK scheme on this part. 
Since~$v^{[n]}$ and~$F^{[n]}$ are non-stiff, the scheme 
is necessarily \textit{uniformly} of order $q$, i.e. 
$$ 
\left| v^{[n]}(t_i) - v_i \right| 
\leq \Dt^{q} \cdot t_i \cdot \| \dpt^{q+1} v^{[n]} \|_{L^{\infty}} 
$$
using usual error bounds on RK schemes. 
The reader may notice that the absolute error involving~$|\cdot|$ was used, 
not the modified error involving~$\modnorm{ \, \cdot \, }$. 
The results in~\cite{cite:exp_erk} 
state that an exponential RK scheme of order $q$ generates an error given by
\begin{equation} \label{mima_eq:err_on_w}
\modnorm{ w^{[n]}(t_i) - w_i } \leq C \Dt^{q} \Big( \| \dpt^{q-1} E^{[n]} \|_{\infty} + \| \dpt^{q} E^{[n]} \|_{L^1} \Big) . 
\end{equation}
The bounds on $E^{[n]} = \dpt w^{[n]} + \frac{1}{\eps} \Lambda w^{[n]}$ and its derivatives 
w.r.t. $\eps$ can be found in Theorem~\ref{mima_thm:well_posedness}, 
rendering the computation of bounds on the error of the micro part straightforward. 
From Theorem~\ref{approx_thm:diff_cov_bounds}.\textit{(i)}, $\Omega^{[n]}_{\tau}(u) = e^{-\tau \Lambda} u + \bigO(\eps)$, 
therefore the error on $\Omega^{[n]}_{t/\eps}(v^{[n]})$ is of the form 
$$ 
\Omega^{[n]}_{t_i/\eps} \big( v^{[n]}(t_i) \big) - \Omega^{[n]}(v_i) 
= e^{-t_i \Lambda/\eps} \big(v^{[n]}(t_i) - v_i \big) + \eps r_i
$$
where $v^{[n]}(t_i) - v_i$ and $r_i$ are of size $t_i \cdot \Dt^q$. 
The error can therefore be bounded, denoting $\vertiii{\cdot}$ the induced norm from $\R^d$ to $\R^d$,  
$$
\modnorm{ \Omega^{[n]}_{t_i/\eps} \big( v^{[n]}(t_i) \big) - \Omega^{[n]}(v_i) }
\leq \left( 1 + \vertiii{ \frac{t_i}{\eps}\Lambda e^{-\frac{t_i}{\eps} \Lambda } } \right) | v^{[n]}(t_i) - v_i | + \left( \eps + \vertiii{\Lambda} \right) | r_i | .
$$
From this we get the desired result on~$u\eeps$.
\end{proof}

\begin{remark} \label{mima_rq:imex}
Only exponential schemes are considered here rather than IMEX-BDF schemes 
which are sometimes preferred (as in~\cite{hu2019uniform}). 
The reason for this is twofold. 

First, the error bounds are generally better for these schemes. 
Indeed, an IMEX-BDF scheme of order $q$ involves the $L^1$ norm of $\dpt^{q+1} w^{[n]}$, 
which is worse than the $L^1$ norm of $\dpt^q E^{[n]}$. 
The former is of size $\bigO(\eps^{n-q})$ while the latter is of size $\bigO(\eps^{n+1-q})$.
This allows the use of schemes of order $n+1$ rather than $n$.

Second, because $\Lambda$ is diagonal, 
the exponentials $e^{-\frac{\Dt}{\eps}\Lambda}$ are easy to compute. 
Therefore there is no computational drawback to exponential schemes. 
\end{remark}

\section{Application to ODEs derived from suitably discretized PDEs}
\label{sec:pde}

In this section, we present some tools to adapt our previously developed method 
to partial differential equations. 
This is done by studying two hyperbolic relaxation systems of the form 
$$ 
\left\{ \begin{array}{l}
\dpt u + \dpx \tilde{u} = 0 \\ \displaystyle
\dpt \tilde{u} + \dpx u = \frac{1}{\eps}(g(u) - \tilde{u})
\end{array} \right .
$$
where $g$ acts either as a differential operator on $u$, or as a scalar value function. 
These two problems may seem similar in theory, 
and the latter actually serves as a stepping stone to treat the former in~\cite{jin1998diffusive, jin2000uniformly}, 
but we will treat them quite differently in practice. 
Because our results may not be valid when using operators, 
we shall only be studying these problems \textit{after} discretizing it them, 
using either Fourier modes or finite volumes. 

Even after discretization, it will be apparent that a direct application of the method is impossible, 
often because of the apparition of a Laplace operator with the "wrong" sign. 
The goal of this section is precisely to present possible workarounds to overcome the problems that appear. 
As such, the computation of maps $\Omega^{[n]},\ F^{[n]}$ and $\eta^{[n]}$ used in~\eqref{mima_pb:mima_f_eta} will not be detailed. 
Should the reader wish to see a more detailed and direct application of our method, 
they can find one in Subsection~\ref{tests_subsec:oscill}.

\subsection{The telegraph equation}
\label{pde_subsec:telegraph}
\newcommand{\xvar}{\rho}
\newcommand{\zvar}{z}

A commonly studied equation in kinetic theory is the one-dimensional Goldstein-Taylor model, 
also known as the telegraph equation (see~\cite{jin1998diffusive, lemou2008asymptotic}, for instance). 
It can be written, for $(t,x) \in [0,T] \times \R/2\pi\mathbb{Z}$
\begin{equation} \label{test_eq:cont_telegraph}
\left\{ \begin{array}{l}
\dpt \xvar + \dpx j = 0 , \\ \displaystyle
\dpt j + \frac{1}{\eps} \dpx \xvar = -\frac{1}{\eps} j ,
\end{array} \right. 
\end{equation}
where $\rho$ and $j$ represent the mass density and the flux respectively. 
Using Fourier transforms in $x$, it is possible to represent a function $v(t,x)$ by
$$ v(t,x) = \sum_{k \in \mathbb Z} v_k(t) e^{ikx} . $$
Considering a given frequency $k \in \mathbb Z$ the problem can be reduced to 
$$ 
\left\{ \begin{array}{l} \displaystyle 
\dpt \xvar_k = - ik j_k ,
\\ \displaystyle
\dpt j_k = -\frac{1}{\eps} \left( j_k + ik \xvar_k \right) .
\end{array} \right .
$$
Treating this problem using our method directly leads to dead-ends, 
therefore we will guide the reader through our reasoning navigating some of these dead-ends. 
This will lead to micro-macro decompositions of orders 0 and 1. 
These struggles can be seen as limitations of our approach, 
however we show that with only slight tweaks, 
it is possible to obtain an error of uniform order $3$ using a classic exponential RK scheme. 
We see this as an encouragement to keep working with this method. 

In order to make a component $-\inveps z$ appear, it would be tempting to set $\zvar_k = j_k + ik \xvar_k$. 
This quantity would verify the following differential equation 
$$ \dpt \zvar_k = - \frac{1}{\eps} \zvar_k + k^2 \zvar_k - ik^3 \xvar_k . $$
Integrating this differential equation gives 
\begin{equation} \label{test_eq:naive_integ_zk}
z_k(t) = \exp \left( -\lambda \frac{t}{\eps} \right) z_k(0) - ik^3 \int_0^t e^{(s-t) \lambda/\eps} \rho_k(s) \D s . 
\end{equation}
where $\lambda = 1 - \eps k^2$. 
Because $\eps \in (0,1]$ and $k \in \mathbb Z$ should not be correlated, 
$\lambda$ can take any value in $(-\infty, 1)$. 
For $\lambda$ negative, this equation is unstable and cannot be solved numerically
using standard tools. 
To overcome this, we consider the stabilized change of variable instead 
\newcommand{\quot}{1 + \alpha \eps k^2}
$$
\zvar_k = j_k + \frac{ik}{\quot}\ \xvar_k
$$
where $\alpha$ is a positive constant which we shall calibrate as the study progresses. 
This is the same change of variable as before up to $\bigO(\eps)$, 
but $ik \xvar_k$ was regularized with an elliptic operator to help with high frequencies. 
The problem to solve becomes 
\begin{equation} \label{pde_pb:telegraph}
\left\{ \begin{array}{l} \displaystyle 
\dpt \xvar_k = -\frac{k^2}{\quot} \xvar_k - ik \zvar_k ,
\\ \displaystyle
\dpt \zvar_k = -\frac{1}{\eps} \zvar_k + \frac{k^2}{\quot} \zvar_k - \frac{ik^3}{\quot} \left( \alpha + \frac{1}{\quot} \right) \xvar_k.
\end{array} \right .
\end{equation} 
As in~\eqref{test_eq:naive_integ_zk}, the growth of $\zvar_k$ is given by 
$e^{-\lambda t/\eps}$ if $\lambda$ is defined by 
$$
\lambda = 1 - \dfrac{\eps k^2}{1 + \alpha \eps k^2} \in \left( 1 - \frac{1}{\alpha}, 1 \right] .
$$
For stability reasons $\lambda$ must be positive, therefore we shall choose $\alpha \geq 1$. 

Let us set $u_k = (\xvar_k, \zvar_k)^T$ and $\Lambda = \text{Diag}(0,1)$ 
such that $\dpt u_k = -\frac{1}{\eps} \Lambda u_k + f(u_k)$ with 
\begin{equation} \label{pde_def:f_telegraph}
\renewcommand{\arraystretch}{2}
f(u) = \among{\displaystyle
-\frac{k^2}{\quot}u_1 - ik u_2
}{\displaystyle
\frac{k^2}{\quot}u_2 - \frac{ik^3}{\quot} \left( \alpha + \frac{1}{\quot} \right) u_1
} .
\end{equation} 
In the upcoming study, we usually prefer the notation $f(\xvar, \zvar)$ rather than $f(u)$ 
so as to keep the distinction between both coordinates clear. 
Assuming $|k| \leq k_{\max}$, it is possible to bound $f(\xvar_k, \zvar_k)$ independently of $k$ and of $\eps$, 
allowing us to apply the method developed in this paper 
in order to approximate every $\xvar_k$ and $j_k$, 
and eventually $\xvar(x,t)$ and $j(x,t)$. 
Note that no rigorous aspects of convergence in functional spaces are considered here 
-- this will be treated in a forthcoming work. 
We omit the index $k$ going forward for the sake of clarity. 

The micro-macro method is initialized by setting the change of variable $\Omega^{[0]}_{\tau}(\xvar, \zvar) = ( \xvar, e^{-\tau} \zvar )^T$. 
The vector field followed by the macro part $v^{[0]}$ is $F^{[0]}$ given by  
\begin{equation} \label{pde_eq:def_kmod}
F^{[0]}(\xvar,\zvar) = \hat{k}^2\among{-\xvar}{\zvar} 
\qquad \text{with} \qquad 
\hat{k} = \frac{k}{\sqrt{\quot}} .
\end{equation} 
This means that the macro variable $v^{[0]}(t)$ is given by 
$$ 
v^{[0]}(t) = \begin{pmatrix} 
e^{ - \hat{k}^2 t } & 0 \\
0 & e^{ \hat k^2 t} \end{pmatrix}
v^{[0]}(0) .
$$
Notice that the growth of $v^{[0]}_2(t)$ is in $e^{\hat{k}^2 t}$, 
which is akin to the heat equation in reverse time.%
\footnote{
This problem does not appear for the oscillatory equivalent~\eqref{approx-pb:periodic}: 
A direct calculation yields $G^{[0]}(y) = i \hat{k}^2 (y_1, -y_2)^T$, 
meaning both components of the macro part in $y\eeps$ oscillate. 
}
This is problematic, as it is possible for $\hat{k}$ to be quite big. 
For example with $k = 10, \alpha = 2$ and $\eps = 10^{-2}$, 
one gets $e^{\hat{k}^2} \approx 3\cdot 10^{14}$. 
In order to obtain the solution of~\eqref{test_eq:cont_telegraph}, 
$u_k(t) = \Omega^{[0]}_{t/\eps} \left( v^{[0]}(t) \right) + w^{[0]}(t)$, 
however, we are only interested in $\Omega^{[0]}_{t/\eps}\left ( v^{[0]}(t) \right)$ 
for the macro part, 
and $\eta^{[0]}_{t/\eps} \left( v^{[0]}(t) \right)$ for the micro part, 
which only depend on $e^{-\frac{t}{\eps}\Lambda} v^{[0]}(t)$ 
as can be seen in the upcoming expression of $\eta^{[0]}$
and using $\Omega^{[0]}_{\tau}(u) = e^{-\tau \Lambda} u$. 
This means that the interesting quantity is 
\begin{equation} \label{test_eq:tel_mac_zero}
e^{-\frac{t}{\eps}\Lambda} v^{[0]}(t) = \begin{pmatrix} 
e^{ - \hat{k}^2 t } & 0 \\
0 & e^{ -(1 - \eps \hat k^2) \frac{t}{\eps}} \end{pmatrix}
v^{[0]}(0) .
\end{equation}
Recognizing $1 - \eps \hat{k}^2 = \lambda > 0$ in this expression, 
it follows that $v^{[0]}_2$ is a decreasing function of time, 
therefore it is bounded uniformly for all~$t$, $k$ and $\eps$. 
Because the exact computation of $e^{-\frac{t}{\eps}\Lambda}v^{[0]}(t)$ 
is readily available, it is used during implementation, 
leaving only $w^{[0]}$ to be computed numerically using ERK schemes. 
Should the reader wish to conduct their own implementation, they should use the defect 
$$ 
\eta^{[0]}_{\tau}(\xvar, \zvar) = \among{
ik e^{-\tau} \zvar
}{ \displaystyle
\hat{k}^2 \left( \alpha + \frac{1}{\quot} \right) ik \xvar
} 
= \eta^{[0]}_0 (\xvar, e^{-\tau} \zvar).
$$
By linearity of $f$, the micro variable $w^{[0]}$ follows the differential equation 
$$
\dpt w^{[0]} = -\frac{1}{\eps}\Lambda w^{[0]} + f(w^{[0]}) - \eta^{[0]}_0\left( e^{-\frac{t}{\eps}\Lambda} v^{[0]}(t) \right), \qquad w^{[0]}(0) = 0. 
$$
The rescaled macro variable $e^{-\frac{t}{\eps}\Lambda} v^{[0]}(t)$ 
is given by relation~\eqref{test_eq:tel_mac_zero} with initial condition $v^{[0]}(0) = u(0) = (\xvar_k(0), \zvar_k(0) )^T$.

Extending our expansion to order 1 is not trivial either. 
Direct application of iterations~\eqref{approx-eq:periodic_iter} yields 
$$
\Omega^{[1]}_{\tau}(\xvar, \zvar) = \among{ \displaystyle
\xvar + \eps ik e^{-\tau} \zvar 
}{ \displaystyle 
\zvar - \eps \hat{k}^2 \left( \alpha + \frac{1}{\quot} \right) ik \xvar 
} 
$$
from which the vector field for the macro part is 
$$
F^{[1]}(\xvar, \zvar) = \hat{k}^2 \left( 1 + \eps k^2 \left( \alpha + \frac{1}{\quot} \right) \right) 
\among{-\xvar}{\zvar} .
$$
Following the same reasoning as before, one should study 
the evolution of the $z$-component of the rescaled macro variable $e^{-\frac{t}{\eps}\Lambda}v^{[1]}(t)$. 
This evolution is in $e^{-\widetilde{\lambda}t/\eps}$ 
where $\widetilde{\lambda} = 1 - \eps \hat{k}^2 \left( 1 + \eps k^2 \left( \alpha + \frac{1}{\quot} \right) \right)$. 
Studying $\widetilde{\lambda}$ as a function of $\eps k^2$ in $\R_+$ shows that 
it is negative for $\eps k^2 > 1$, whatever the value of $\alpha \geq 1$. 

To circumvent this, we replace $\eps$ by $\frac{\eps}{\quot}$ in iterations~\eqref{approx-eq:periodic_iter}. 
This adds terms of order $\eps^2$ in the definition of $\Omega^{[1]}$ 
that do not modify any properties of the micro-macro decomposition 
but it regularises the problem. 
Specifically, we define 
\begin{equation} \label{pde_eq:cov_telegraph}
\renewcommand{\arraystretch}{1.5}
\Omega^{[1]}_0(\xvar, \zvar) = \among{ \displaystyle
\xvar + \frac{\eps}{\quot} ik \zvar
}{ \displaystyle 
\zvar - \frac{\eps}{\quot} \hat{k}^2 \left( \alpha + \frac{1}{\quot} \right) ik \xvar 
} ,
\end{equation}
from which we get the vector field 
$$ 
F^{[1]}(\xvar,\zvar) = \hat{k}^2 \left( 1 + \eps \hat{k}^2 \left( \alpha + \frac{1}{\quot} \right) \right) \among{-\xvar}{\zvar} .
$$
This time also, the identities $\Omega^{[1]}_{\tau} (u) = \Omega^{[1]}_0 ( e^{-\tau \Lambda} u)$ 
and $\eta^{[1]}_{\tau}(u) = \eta^{[1]}_0( e^{-\tau \Lambda} u)$ are satisfied, 
therefore the interesting variable is $e^{-\frac{t}{\eps}\Lambda}v^{[1]}(t)$. 
The quantity dictating its growth is 
$$
\tilde{\lambda} = 1 - \eps \hat{k}^2 \left( 1 + \eps \hat{k}^2 \left( \alpha + \frac{1}{\quot} \right) \right)
$$
which is positive for all $\eps k^2 \in \R_+$ if and only if $\alpha \geq 2$. 
As with the expansion of order 0, the macro variable should be rescaled and computed exactly. 
The micro part $w^{[1]}$ is given by the differential equation 
$$
\dpt w^{[1]} = -\frac{1}{\eps}\Lambda w^{[1]} + f(w^{[1]}) - \eta^{[1]}_0 \left( e^{-\frac{t}{\eps}\Lambda} v^{[1]}(t) \right), 
\qquad w^{[1]}(0) = u_k(0) - \Omega^{[1]}_0 \left( v^{[1]}(0) \right) 
$$
where, writing $\hat{I} = (1 + \alpha \eps k^2)^{-1}$, 
\begin{equation} \label{pde_eq:eta_telegraph}
\renewcommand{\arraystretch}{1.3}
\eta^{[1]}_{\tau}(\xvar, \zvar) = ik \cdot \eps \hat{k}^2 \left( \alpha + \hat{I} \left( 2 + \eps \hat{k}^2 ( \alpha + \hat{I}) \right) \right) 
\among{
e^{-\tau} \zvar
}{
\hat{k}^2 (\alpha + \hat{I}) \xvar
}
\end{equation} 
\begin{equation} \label{pde_def:init_1_telegraph}
\renewcommand{\arraystretch}{1.5}
\quad \text{and} \quad v^{[1]}(0) = \among{
\xvar_k(0) - \eps \hat{I} ik \zvar_k(0)
}{
\zvar_k(0) + \eps \hat{k}^2 (\alpha + \hat{I} ) ik \xvar_k(0)
} .
\end{equation} 
We approached the initial condition using Remark~\ref{mima_rq:init_cond}, 
but an exact computation of the exact initial condition $ \big( \Omega^{[1]}_0 \big)^{-1}(u_0) $ is possible, 
as the map $ u \mapsto \Omega^{[1]}_0(u)$ is linear.

\begin{proposition} \label{pde_prop:telegraph}
Given a maximum frequency $k_{\max} > 0$ and a scalar $\alpha \geq 2$, and assuming $|k| \leq k_{\max}$, 
the solution $u_k$ of problem~\eqref{pde_pb:telegraph} can be decomposed into 
$$
u_k(t) = \Omega^{[1]}_0 \left( e^{-\frac{t}{\eps}\Lambda} v^{[1]}(t) \right) + w^{[1]}(t) 
$$
where $\Omega^{[1]}_0$ is given by~\eqref{pde_eq:cov_telegraph} and $w^{[1]}(t) = \bigO(\eps^2)$. 
The macro component $v^{[1]}$ is given by 
$$ 
e^{-\frac{t}{\eps}\Lambda} v^{[1]}(t) = \begin{pmatrix}
e^{-K^{[1]} t} & 0 \\ 0 & e^{-(1-\eps K^{[1]}) \frac{t}{\eps}} 
\end{pmatrix} 
v^{[1]}(0) 
$$ 
with $K^{[1]} = \hat{k}^2 \left( 1 + \eps \hat{k}^2 \left( \alpha + \frac{1}{\quot} \right) \right) $, 
$\hat{k} = \frac{k}{ \sqrt{1 + \alpha \eps k^2} }$ 
and $v^{[1]}(0)$ is either $\big( \Omega^{[k]}_0 \big) ^{-1} \big( u_k(0) \big)$ 
or its approximation~\eqref{pde_def:init_1_telegraph}. 
The micro component $w^{[1]}$ is the solution to 
$$
\dpt w^{[1]} 
= -\frac{1}{\eps} \Lambda w^{[1]} + f \left( w^{[1]} \right) 
- \eta^{[1]}_0 \left( e^{-\frac{t}{\eps} \Lambda} v^{[1]}(t) \right) ,
\qquad 
w^{[1]}(0) = u_k(0) - \Omega^{[k]}_0 \left( v^{[1]}(0) \right)
$$
with $f$ and $\eta^{[1]}_0$ given respectively by~\eqref{pde_def:f_telegraph} and~\eqref{pde_eq:eta_telegraph}. 
With these definitions, $w^{[1]}$ can be computed with a uniform error of order~$2$ as defined by~\eqref{intro_eq:unif_error}, 
therefore $u_k$ can be computed with a uniform error of order~$2$. 
\end{proposition}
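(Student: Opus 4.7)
The plan is to verify that the maps $\Omega^{[1]}_0$, $F^{[1]}$, $\eta^{[1]}_0$ given in the statement still satisfy the defect identity~\eqref{approx_eq:eta_defect} (up to the needed order), and then transpose the well-posedness/uniform-accuracy arguments of Section~\ref{sec:mima} to the present linear setting. First I would recall that because $f$ in~\eqref{pde_def:f_telegraph} is linear in $u$, the modified averaging iteration (where $\eps$ is replaced by $\eps/(1+\alpha\eps k^2)$ inside~\eqref{approx-eq:periodic_iter}) produces $\Omega^{[1]}_0$ explicitly as given in~\eqref{pde_eq:cov_telegraph}, and since $\tilde{\Phi}^{[1]}$ only involves the two modes $e^{0}$ and $e^{i\theta}$, a direct Fourier computation shows $\Omega^{[1]}_\tau(u)=\Omega^{[1]}_0(e^{-\tau\Lambda}u)$ and $\eta^{[1]}_\tau(u)=\eta^{[1]}_0(e^{-\tau\Lambda}u)$. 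A plug-in computation then confirms that $\eta^{[1]}$ is indeed the defect associated with $(\Omega^{[1]},F^{[1]})$, so that the decomposition~\eqref{mima_eq:diff_decomp} and the micro-macro system~\eqref{mima_pb:mima_f_eta} hold.

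Next I would solve the macro equation exactly. Since $F^{[1]}(v)=K^{[1]}(-v_1,v_2)^{T}$ with $K^{[1]}$ constant, the flow is diagonal and gives the stated expression for $e^{-\frac{t}{\eps}\Lambda}v^{[1]}(t)$. The key quantitative point is the inequality $1-\eps K^{[1]}\geq 0$ for $\alpha\geq 2$: setting $\xi=\eps\hat{k}^{2}=\eps k^{2}/(1+\alpha\eps k^{2})\in[0,1/\alpha)$ and $\hat{I}=1-\alpha\xi$, one has
\[
\eps K^{[1]}=\xi\bigl(1+\xi(\alpha+\hat{I})\bigr)=\xi+(\alpha+1)\xi^{2}-\alpha\xi^{3},
\]
a function increasing on $[0,1/\alpha]$ with maximum value $2/\alpha$, so $\eps K^{[1]}\leq 1$ iff $\alpha\geq 2$. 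This guarantees that the $\zvar$-component of $e^{-\frac{t}{\eps}\Lambda}v^{[1]}$ decays (or stays bounded) uniformly in $t\geq 0$, $\eps\in(0,1]$ and $|k|\leq k_{\max}$, so this part of the decomposition can be computed exactly with no error.

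Defining $w^{[1]}:=u_{k}-\Omega^{[1]}_{0}(e^{-\frac{t}{\eps}\Lambda}v^{[1]})$, the defect identity just verified shows that $w^{[1]}$ solves the Cauchy problem stated in the proposition. I would then invoke the reasoning of Theorem~\ref{mima_thm:well_posedness}, adapted to this linear setting: the bound on $S^{[1]}=-\eta^{[1]}_{0}(e^{-\frac{t}{\eps}\Lambda}v^{[1]})$ is straightforward from the explicit expression~\eqref{pde_eq:eta_telegraph}, yielding $w^{[1]}(t)=\bigO(\eps^{2})$ together with the derivative bounds needed for the exponential Runge-Kutta estimate. The constants are controlled uniformly in $\eps$ and in $k$ for $|k|\leq k_{\max}$ because all $k$-dependent coefficients in $f$ and $\eta^{[1]}_{0}$ (powers $k^{2}$, $k^{3}$, factors $1/(1+\alpha\eps k^{2})$) are bounded by a constant depending only on $k_{\max}$ and $\alpha$. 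Finally, applying Theorem~\ref{mima_thm:uniform} with $n=1$ and $q=2$ to the micro equation gives the uniform second-order convergence for $w^{[1]}$, and therefore for $u_{k}$ since the macro part is computed exactly.

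The main obstacle is conceptual rather than computational: strictly speaking, the maps $\Omega^{[1]}_{0}$, $F^{[1]}$, $\eta^{[1]}_{0}$ used here do \emph{not} come from the unmodified iteration~\eqref{approx-eq:periodic_iter}, so Theorems~\ref{approx_thm:diff_cov_bounds} and~\ref{mima_thm:well_posedness} do not apply off-the-shelf. One must check directly that (i) the defect relation~\eqref{approx_eq:eta_defect} still holds with the tweaked maps, (ii) the size estimates $\Omega^{[1]}_{0}-\id=\bigO(\eps)$ and $\eta^{[1]}_{0}=\bigO(\eps^{2})$ survive the modification, and (iii) the macro part of the decomposition remains bounded uniformly, which is precisely the stability condition $\alpha\geq 2$. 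Because $f$ is linear and the tweak only perturbs the iterations by $\bigO(\eps^{2})$, all three points reduce to finite, explicit verifications; the nontrivial piece is the stability analysis carried out above, which singles out the threshold $\alpha=2$.
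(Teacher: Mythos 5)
Your overall route is the same as the paper's: the paper gives no separate formal proof of this proposition --- it is a summary of the construction of Subsection~\ref{pde_subsec:telegraph} (the tweaked change of variable, the identities $\Omega^{[1]}_{\tau}(u)=\Omega^{[1]}_0(e^{-\tau\Lambda}u)$ and $\eta^{[1]}_{\tau}(u)=\eta^{[1]}_0(e^{-\tau\Lambda}u)$, the exactly-solved rescaled macro flow, and the ERK computation of the micro part), combined with the general micro-macro results of Sections~\ref{sec:avg}--\ref{sec:mima}. Your stability computation fills in the detail the paper only asserts: with $\xi=\eps\hat k^{2}\in[0,1/\alpha)$ and $\hat I=1-\alpha\xi$, the identity $\eps K^{[1]}=\xi+(\alpha+1)\xi^{2}-\alpha\xi^{3}$, its monotonicity and the limit value $2/\alpha$ correctly single out the threshold $\alpha\geq 2$; and your remark that the general theorems do not apply off-the-shelf to the modified maps (the paper merely states that the added $\bigO(\eps^{2})$ terms ``do not modify any properties'') is the honest reading of what must be verified.

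One item in your verification list is wrong as stated: $\eta^{[1]}_{0}$ is \emph{not} $\bigO(\eps^{2})$. From~\eqref{pde_eq:eta_telegraph} its prefactor is $ik\,\eps\hat k^{2}\bigl(\alpha+\hat I(2+\eps\hat k^{2}(\alpha+\hat I))\bigr)$, so the defect is only $\bigO(\eps)$ --- consistent with the general theory, where $\delta^{[n]}=\bigO(\eps^{n})$ while $w^{[n]}=\bigO(\eps^{n+1})$. The extra power of $\eps$ in $w^{[1]}=\bigO(\eps^{2})$ comes not from the pointwise size of the defect but from its structure, exactly as in the proof of Theorem~\ref{mima_thm:well_posedness}: the $\rho$-component of $\eta^{[1]}_{\tau}$ carries a factor $e^{-\tau}$, so its time integral gains a factor $\eps$, while the $z$-component, which has no decay in $\tau$, is integrated against the damping kernel $e^{-(t-s)/\eps}$ produced by $-\frac{1}{\eps}\Lambda w$, again gaining a factor $\eps$. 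A naive Gronwall estimate using only $\|\eta^{[1]}\|_{\infty}=\bigO(\eps)$ would yield $w^{[1]}=\bigO(\eps)$ only. Since you anyway propose to transpose the reasoning of Theorem~\ref{mima_thm:well_posedness} (which contains precisely this mechanism, via $c_{0}(\delta^{[n]})=0$ and the exponential damping), this is a repairable slip rather than a flaw in the strategy, but point (ii) of your final paragraph should be restated in this form.
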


\subsection{Relaxed conservation law}
\label{pde_subsec:conservation}
\newcommand{\Dx}{\Delta x}
\newcommand{\rlxcont}{\tilde{u}}
\newcommand{\rlxdisc}{\tilde{U}}
\renewcommand{\xvar}{U_1}
\renewcommand{\zvar}{U_2}

Our second test case is a hyperbolic problem for $(t,x) \in [0,T] \times \R/2\pi \mathbb Z$, 
\begin{equation} \label{test_eq:cont_hyperb_uv}
\left\{ \begin{array}{l}
\dpt u + \dpx \rlxcont = 0 , \\ \displaystyle
\dpt \rlxcont + \dpx u = \frac{1}{\eps}(g(u) - \rlxcont) ,
\end{array} \right .
\end{equation}
with smooth initial conditions $ u(0,x) $ and $ \rlxcont(0,x) $. 
This is a stiffly relaxed conservation law, as presented in~\cite{jin1995relaxation}. 
In order to proceed, we require the following condition to be met: 
\begin{equation} \label{test_hyp:conserv_stab}
|g'(u)| < 1
\end{equation}
This is a known stability condition when deriving asymptotic expansions for this kind of problem. 

We start by discretising this system in space with $N > 0$ points. 
Going forward, $(x_j)_{j \in \mathbb{Z}/N\mathbb{Z}}$ denotes a fixed uniform discretisation of $\R/2\pi \mathbb Z$, of mesh size $\Dx := 2\pi/N$. 
We define the vectors $U = (u_j)_j, \rlxdisc = (\rlxcont_j)_j$ 
and, given a vector $V = (v_j)_j$ of size $N$, $g(V) = (g(v_j))_j$. 
For simplicity, $u_j(t)$ is the approximation of $u(t,x_j)$, and the same goes for $\rlxcont$. 
We denote $D$ the matrix of centered finite differences and $L$ the classic discrete Laplace operator, which is to say 
$$D V = \left( \frac{1}{2\Dx} (v_{j+1} - v_{j-1}) \right)_j 
\quad\text{and}\quad
L V = \left( \frac{1}{\Dx^2} ( v_{j+1} - 2 v_j + v_{j-1}) \right)_j $$
Using an upwind scheme after diagonalising problem~\eqref{test_eq:cont_hyperb_uv} yields 
\begin{equation} \label{test_eq:disc_hyperb_uv}
\renewcommand{\arraystretch}{2}
\left\{ \begin{array}{l} \displaystyle
\dpt U + D\rlxdisc - \frac{\Dx}{2} LU = 0 , 
\\ \displaystyle
\dpt \rlxdisc + DU - \frac{\Dx}{2} L\rlxdisc  = \frac{1}{\eps}(g(U) - \rlxdisc ) .
\end{array} \right .
\end{equation}
Setting $\xvar = U$ and $\zvar := \rlxdisc - g(\xvar)$, and neglecting the terms involving $L$ for clarity, 
this problem becomes 
\begin{equation} \label{test_pb:conserv} 
\renewcommand{\arraystretch}{2}
\left\{ \begin{array}{l} \displaystyle
\dpt \xvar =
- D\big( \zvar + g(\xvar) \big) , 
\\ \displaystyle
\dpt \zvar = 
-\inveps \zvar + g'(\xvar) D\zvar - T(\xvar) 
\end{array} \right . 
\end{equation} 
where we defined 
$ T(U_1) := DU_1 - g'(U_1) D g(U_1) . 
$
From this, our method can be applied, but precautions must be taken 
in order to avoid having to solve the heat equation in backwards time. 
Therefore we set 
$$ \renewcommand{\arraystretch}{1.5}
\Omega^{[1]}_{\tau}(\xvar,\zvar) = \among{ \displaystyle
\xvar + \eps (1 - 2\eps D^2)^{-1} D\zvar
}{ \displaystyle
e^{-\tau} \zvar - \eps T(\xvar)
} .
$$
Similarly to the manipulations for the telegraph equation, 
we multiplied $\eps$ by $(I_N - 2 \eps D^2)^{-1}$, 
but this time only for the first component. 
Writing $\widetilde{D} = (I_N - 2 \eps D^2)^{-1} D$, 
the associated vector field is 
$$ \renewcommand{\arraystretch}{2}
F^{[1]}(\xvar, \zvar) = \among{ \displaystyle
- Dg(\xvar) + \eps D T(\xvar) 
}{\displaystyle 
g'(\xvar)D\zvar - \eps T'(\xvar)\widetilde{D}\zvar - \eps^2 g''(\xvar) \big( T(\xvar), \widetilde{D}\zvar \big)
} .
$$
As in Subsection~\ref{tests_subsec:oscill}, it is possible to obtain $\Omega^{[0]}$ and $F^{[0]}$ 
by neglecting the terms of order $\eps$ and above in the expressions above. 

\begin{remark}
Remember that for the telegraph equation, the macro variable $v^{[1]}(t)$ 
needed to be rescaled by $e^{-t\Lambda/\eps}$. 
This is not the case here: In the limit $\Dx \rightarrow 0$, 
the macro variable $v^{[1]} = (\ol{u}_1, \ol{u}_2)^T$ is given by 
$$ \newcommand{\macx}{\ol{u}_1}
\newcommand{\macz}{\ol{u}_2}
\renewcommand{\arraystretch}{1.3}
\left\{
\begin{array}{l} \displaystyle
\dpt \macx = - \dpx \left[ g(\macx) - \eps \big( 1 - g'(\macx)^2 \big) \dpx \macx \right] ,
\\ \displaystyle
\dpt \macz = g'(\macx) \dpx \macz - \left( 1 - g'(\macx)^2 \right) \cdot (1 - 2\eps \dpx^2 )^{-1} \eps \dpx^2 \macz + \eps \phi\eeps( \macx, \widetilde{D} \macz ) 
\end{array} \right.
$$
with $\widetilde{D} = (1-2\eps \dpx^2)^{-1}\dpx$ and $\phi\eeps(u_1, u_2) = g''(u_1) \left( 2g'(u_1) - \eps (1-g'(u_1)^2) \dpx u_1 \right) u_2 $. 
The operator $(1 - 2\eps \dpx^2)^{-1} \eps \dpx^2$ is bounded, 
therefore $\ol{u}_2$ is well-defined. 
The equation on $\ol{u}_1$ is a well-known result. 
If $\eps$ was also relaxed in the $\zvar$-component of $\Omega^{[1]}$, 
there might be no need for condition~\eqref{test_hyp:conserv_stab} 
but the result would be different. 
\end{remark}

Obtaining the defects of order 0 and 1 from these expressions presents no difficulty. 
For $\eta^{[1]}$, we separate here the $\xvar$-component and the $\zvar$-component for clarity. 
$$ \eta^{[0]}_{\tau}(\xvar , \zvar ) = \among{
e^{-\tau} D \zvar
}{ \displaystyle
T( \xvar )
} , $$

\begin{subequations}
\newcommand{\DD}{\widetilde{D}}
\begin{equation}
\begin{array}{ll} 
\eta^{[1]}_0(\xvar,\zvar)_{\xvar} = \!\!\! & 
D \big( g(\xvar+\eps \DD W) - g(\xvar) \big) 
+ (D - \DD) \zvar
\\ & \displaystyle 
+\ \eps \DD \Big( g'(\xvar)DW 
- \eps T'(\xvar) \DD W - \eps^2 g''(\xvar) \big( T(\xvar), \DD W \big) \Big) , 
\end{array} 
\end{equation} 
\begin{equation} 
\begin{array}{ll}
\eta^{[1]}_0(\xvar,\zvar)_{\zvar} = \!\!\! &
- \big( g'(\xvar + \eps \DD \zvar) - g'(\xvar) \big) D\zvar 
\\ & \displaystyle 
+\ T(\xvar + \eps \DD \zvar) - T(\xvar) - \eps T'(\xvar) \DD \zvar
\\ & \displaystyle 
+\ \eps g'(\xvar + \eps \DD \zvar) DT(\xvar) - \eps^2 g''(\xvar) \big( \DD \zvar, T(\xvar) \big) 
\\ & \displaystyle 
+\ \eps T'(\xvar) \left( D g(\xvar) - \eps T(\xvar) \right) . 
\end{array} 
\end{equation}
\end{subequations}
The values of $\eta^{[1]}_{\tau}(U_1, U_2)$ can be recovered using the identity 
$$ 
\eta^{[1]}_{\tau}(U_1, U_2) = \eta^{[1]}_0(U_1, e^{-\tau} U_2) .
$$

Note that when using a given scheme, solving a single step is much more costly 
for the micro-macro problem than for the direct problem: 
Not only is the system size doubled, but the functions implicated 
necessitate more computing power to obtain a single value (especially $\eta^{[1]}$, as is apparent here). 
It is therefore plausible to think that our method is best for computing values during the transient phase, 
after which it is possible to solve the original problem with uniform accuracy.

\section{Numerical simulations} \label{sec:tests}

In this section we shall demonstrate our results 
by confirming the theoretical convergence rates 
of exponential Runge-Kutta (ERK) schemes from~\cite{cite:exp_erk_schm}. 
We also use these schemes on the original problem~\eqref{intro-eq:xz_pb}, 
thereby exhibiting the problem of order reduction. 

In Subsection~\ref{tests_subsec:oscill} we study a toy model with some non-linearity that can be found in~\cite{cite:asym_bseries}, 
for which we compute the micro-macro expansion up to order 2. 
In Subsection~\ref{sec:tests:subsec:pde}, we showcase the results of uniform convergence 
for the partial differential equations of Section~\ref{sec:pde}. 
For these, the exact solution shall not take into account the error in space, 
i.e. it will be the solution to the discretized problem. 
Finally in Subsection~\ref{sec:tests:subsec:equilibrium}, we present a surprising numerical result of order gain 
for problems near equilibrium.

\subsection{Oscillating toy problem} \label{tests_subsec:oscill}

We first study an "oscillating" problem presented in~\cite{cite:asym_bseries} 
which demonstrates a possible use of the method when studying non-linear problems:
\begin{equation} \label{test-pb:xz_oscil}
\left\{
\begin{array}{ll} \displaystyle
\dot{x} = (1-z) \begin{pmatrix} 0 & -1 \\ 1 & 0 \end{pmatrix} x \\ \displaystyle
\dot z = -\inveps z + x_1^{\:2}x_2^{\:2} 
\end{array} \right.
\end{equation}
with initial conditions $x_0 = (0.1, 0.7)^T$ and $z_0 = 0.05$, and final time $T = 1$. 
This is of the form $\dpt u = -\inveps \Lambda u + f(u)$ when setting 
$$ 
\Lambda = 
\text{Diag}(0,0,1)
\qquad\text{and}\qquad
f(u) = \begin{pmatrix}
-(1 - u_3) u_2 \\
(1 - u_3) u_1 \\
(u_1 u_2)^2
\end{pmatrix} .
$$
In order to apply averaging techniques described in Subsection~\ref{sec:approx:subsec:oscill}, we set 
$$
g_{\theta}(u) 
= -i e^{-i \theta \Lambda} f \left( e^{i \theta \Lambda} u \right)
= -i \begin{pmatrix}
-u_2 + e^{i\theta} u_2 u_3 \\
 u_1 - e^{i\theta} u_1 u_3 \\
e^{-i\theta} (u_1 u_2)^2
\end{pmatrix} .
$$
By construction, $\Phi^{[0]}_{\theta}(u) = u$ therefore $G^{[0]}(u) = \langle g \rangle(u) = (-u_2, u_1, 0)^T$. 
One then gets the change of variable at order 1 using~\eqref{approx-eq:periodic_iter}, 
$$
\Phi^{[1]}_{\theta}(u) = \begin{pmatrix}
u_1 - \eps e^{i\theta} u_2 u_3 \\
u_2 + \eps e^{i\theta} u_1 u_3 \\
u_3 + \eps e^{-i\theta} (u_1 u_2)^2
\end{pmatrix}, 
\quad \text{yielding} \quad
G^{[1]}(u) = -i \begin{pmatrix}
-\left( 1 - \eps (u_1 u_2)^2 \right) u_2 \\
 \left( 1 - \eps (u_1 u_2)^2 \right) u_1 \\
 2 \eps u_1 u_2 u_3 (u_1^{\: 2} - u_2^{\: 2})
\end{pmatrix} 
$$
with the definition $G^{[1]} = \langle g \circ \Phi^{[1]} \rangle$. 
In order to compute the change of variable of the next order $\Phi^{[2]}$ given by~\eqref{approx-eq:periodic_iter}, 
one must compute the difference $g \circ \Phi^{[1]} - \dpu \Phi^{[1]} \cdot G^{[1]}$, denoted $T(\Phi^{[1]})$. 
This difference also serves to compute the defect~$\delta^{[1]}$ defined by~\eqref{approx_eq:def_G_delta}, 
as this definition can be written $\delta^{[1]} = \inveps \dptheta \Phi^{[1]} - T(\Phi^{[1]})$. 
A direct calculation yields 
$$ T(\Phi^{[1]})_{\theta} (u) = -i \cdot 
\begin{pmatrix}
e^{i\theta} u_3 \left( u_2 + \eps e^{i\theta} u_1 u_3 + 2 \eps^2 u_1 u_2^{\: 2} (u_1^{\: 2} - u_2^{\: 2} ) \right) 
\\
-e^{i\theta} u_3 \left( u_1 - \eps e^{i\theta} u_2 u_3 - 2 \eps^2 u_1^{\: 2} u_2 (u_1^{\: 2} - u_2^{\: 2} ) \right) 
\\ 
\newcommand{\sqr}{^{\: 2}}
e^{-i\theta} \left( U_0
+ \eps U_1 
+ \eps^2 U_2 \right)
\end{pmatrix}
$$
where for clarity we defined 
$$ U_0 = \left( u_1^{\: 2} + \eps^2 e^{2i\theta} (u_2 u_3)^2 \right) 
\left( u_2^{\: 2} + \eps^2 e^{2i\theta} (u_1 u_3)^2 \right) , $$
$$ 
U_1 = - 2 u_1 u_2 (u_1^{\: 2} - u_2^{\: 2} ) \left( 1 - \eps (u_1 u_2)^2 + \eps e^{3 i\theta} u_3^{\: 3} \right) 
\quad \text{and} \quad 
U_2 = - e^{2i\theta} (2 u_1 u_2 u_3)^2 .
$$
Note that $U_0, U_1$ and $U_2$ depend on both~$\eps$ and~$\theta$.

To compute the expansion of order~2 (i.e. the change of variable of $\Phi^{[2]}$ 
and the vector field $G^{[2]}$), 
we truncate terms of order $\eps^2$ and above in $T(\Phi^{[1]})$ 
(which will not impact results of uniform accuracy) 
and integrate it following formula~\eqref{approx-eq:periodic_iter}. 
Identifying the Fourier coefficients of $e^{i\theta\Lambda} \Phi^{[2]}$, 
we obtain $\Omega^{[2]}$. The vector field $F^{[2]}$ is obtained from $G^{[2]}$. 
Distinguishing the $x$- and $z$-components of $u$, this finally yields 
$$ \renewcommand{\arraystretch}{1.5}
\Omega^{[2]}_{\tau}(x,z) = \begin{pmatrix}
x_1 - \eps e^{-\tau} x_2 z - \frac12 \eps^2 e^{-2\tau} z^2 x_1 \\
x_2 + \eps e^{-\tau} x_1 z - \frac12 \eps^2 e^{-2\tau} z^2 x_2 \\
z + \eps (x_1 x_2)^2 - 2 \eps^2 x_1 x_2 (x_1^2 - x_2^2)
\end{pmatrix}, 
$$
$$ \renewcommand{\arraystretch}{1.5}
F^{[2]}(x,z) = \begin{pmatrix}
x_2 (-1 + \eps (x_1 x_2)^2 - 2 \eps^2 x_1 x_2 (x_1^2 - x_2^2) ) \\
x_1 (1 - \eps (x_1 x_2)^2 + 2 \eps^2 x_1 x_2 (x_1^2 - x_2^2) ) 
\\
2 \eps z x_1 x_2 (x_1^2 - x_2^2)
\end{pmatrix} .
$$
The defect $\eta^{[2]}$ is obtained using relation~\eqref{approx_eq:eta_defect} 
or by computing $\delta^{[2]}$ and identifying the Fourier coefficients. 

\begin{remark}
It is possible to find an approximation of the center manifold $x \mapsto \eps h\eeps(x)$ 
by taking the limit $\tau \rightarrow \infty$ of the $z$-component of $\Omega^{[k]}$. 
For example here 
$$ 
\eps h\eeps(x) = \eps (x_1 x_2)^2 - 2 \eps^2 x_1 x_2 (x_1^2 - x_2^2) + \bigO(\eps^3) .
$$
This coincides with the results in~\cite{cite:asym_bseries}. 
\end{remark}

We remind the reader that the problem that is solved at times $(t_i)_{0 \leq i \leq N}$ is 
$$
\renewcommand{\arraystretch}{1.4}
\left \{
\begin{array}{ll}
\dpt v^{[k]}(t) = F^{[k]}(v^{[k]}) ,
\\ \displaystyle
\dpt w^{[k]}(t) = - \inveps \Lambda w^{[k]} + f\left( \Omega^{[k]}_{t/\eps}(v^{[k]}) + w^{[k]} \right) 
- f\left( \Omega^{[k]}_{t/\eps}(v^{[k]}) \right) 
- \eta^{[k]}_{t/\eps}(v^{[k]}) ,
\end{array}
\right .
$$
with $k = 1,2$. 
This yields vectors $(v_i) \approx (v^{[k]}(t_i))$ and $(w_i) \approx (w^{[k]}(t_i))$, 
from which an approximation $u_i \approx u\eeps(t_i)$ is then obtained by setting 
$ u_i = \Omega^{[k]}_{t_i/\eps}(v_i) + w_i $. 
Initial conditions $v^{[k]}(0)$ and $w^{[k]}(0)$ are computed using Remark~\ref{mima_rq:init_cond}. 

The difference $ f\left( \Omega^{[2]}_{t/\eps}(v^{[2]}) + w^{[2]} \right) 
- f\left( \Omega^{[2]}_{t/\eps}(v^{[2]}) \right) $ is computed using 
$$ \newcommand{\dx}{\tilde{x}}
\newcommand{\dz}{\tilde{z}}
f(x+\dx, z+\dz) - f(x,z) = \begin{pmatrix}
-(1 - z) \dx_2 + (x_2 + \dx_2) \dz \\
(1 - z) \dx_1 - (x_1 + \dx_1) \dz \\
\big( x_1 x_2 + (x_1+\dx_1)(x_2 + \dx_2) \big) 
\left( x_1 \dx_2 + \dx_1 x_2 + \dx_1 \dx_2 \right)
\end{pmatrix}
$$
in order to avoid rounding errors due to the size difference between $u$ and $\tilde{u}$. 

\begin{figure}
\vspace*{-12pt}
\includegraphics[width=\textwidth]{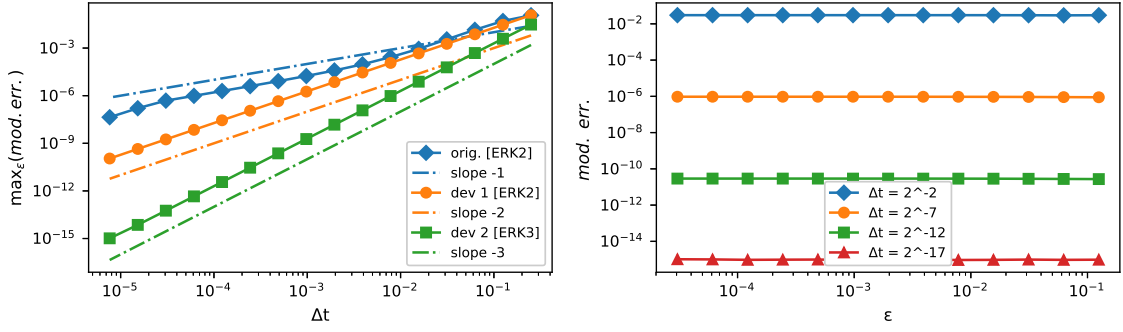}
\vspace*{-24pt}
\caption{Oscillating case: On the left, maximum error on~$\eps$ (for $\eps = 2^{-k}$ with $k$ spanning $\{3, \ldots, 15\}$) as a function of $\Dt$ when using exponential RK schemes (abbr. ERK) of different orders. 
On the right, the error as a function of $\eps$ when solving the micro-macro problem of order~2 using ERK3.}
\label{tests_fig:oscill}
\end{figure}

Figure~\ref{tests_fig:oscill} showcases the phenomenon of order reduction when solving the original problem~\eqref{test-pb:xz_oscil}: 
Despite using a scheme of order 2, the error depends of $\eps$ in such a way that, 
at fixed $\Dt$, there exists no constant $C$ such that the error is bounded by $C\Dt^2$ for all $\eps$. 
However there exists $C$ such that the error is bounded by $C\Dt$. 
This phenomenon of order reduction is discussed in~\cite{cite:exp_erk_schm}. 

In that case, we cannot say that the error is of \textit{uniform} order~1, as this would require the error to be independent of $\eps$. 
However, this is the case when solving the micro-macro problem, 
as can be seen on the right-hand side of Figure~\ref{tests_fig:oscill} for a decomposition of order 2. 
Furthermore, the theoretical orders of convergence from Theorem~\ref{mima_thm:uniform} are confirmed. 
Indeed, using a scheme of order 2 (resp. 3) on the micro-macro problem 
of order 1 (resp. 2) generates a uniform error of the expected order of convergence, 
with no order reduction.

\subsection{Discretized hyperbolic partial differential equations}
\label{sec:tests:subsec:pde}

\paragraph{The telegraph equation \\}
\renewcommand{\xvar}{\rho}
\renewcommand{\zvar}{z}

\begin{figure}
\centering
\includegraphics[width=\textwidth]{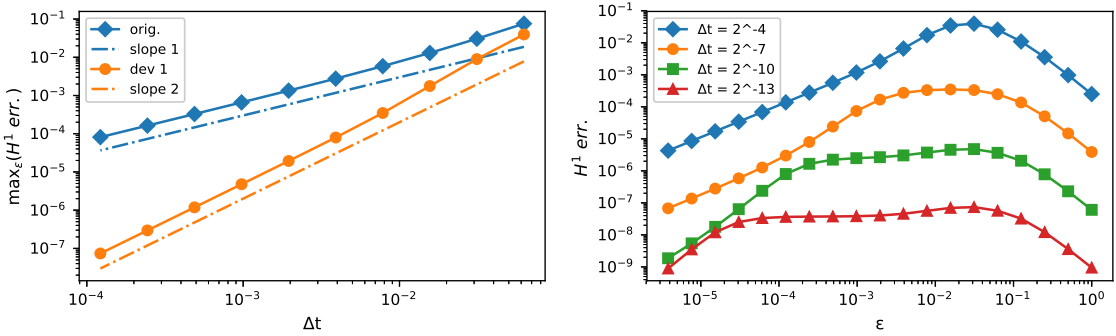}
\vspace*{-24pt}
\caption{Telegraph equation: Absolute $H^1$ error on the solution of~\eqref{test_eq:cont_telegraph} computed by an ERK3 scheme. Supremum on $\eps$ as a function of $\Dt$ (left) and evolution of this error as a function of $\eps$ for the 1st-order decomposition (right). }
\label{test_fig:telegraph}
\end{figure}

Using a spectral decomposition, we solve the problem, for $(t,x) \in [0,T] \times \R/2\pi\mathbb{Z}$, 
\begin{equation*} 
\left\{ \begin{array}{l}
\dpt \xvar + \dpx j = 0 , \\ \displaystyle
\dpt j + \frac{1}{\eps} \dpx \xvar = -\frac{1}{\eps} j , 
\end{array} \right. 
\end{equation*}
by setting $z = j + (1 - \alpha \eps \Delta)^{-1} \dpx z$, yielding problem~\eqref{pde_pb:telegraph}. 
The micro-macro decomposition of order~$1$ is summarized in Property~\ref{pde_prop:telegraph}, 
and its construction is detailed in Subsection~\ref{pde_subsec:telegraph}. 

Implementations are conducted using $\alpha = 2$, 
space frequencies are bounded by $k_{\max} := 12$, 
and initial data is $\xvar(0,x) = e^{\cos(x)}, \ j(0,x) = \frac{1}{2} \cos^3 (x)$. 
Results can be seen in Figure~\ref{test_fig:telegraph} when using a scheme of order 3. 
When solving the original problem, some order reduction is observed, from 3 to 1. 
Here the convergence is not uniform, as it varies with $\eps$ when fixing $\Dt$, 
but this is an artefact due to the exact solving of the macro part: 
The bounds presented in Theorem~\ref{mima_thm:uniform} are \textit{at worst}, 
and the relationship between the error bound and the stiffness of the linear operator 
is rather complex when using exponential RK schemes (again, see~\cite{cite:exp_erk_schm} for details). 
It is known that these schemes have properties of asymptotic preservation (shown in~\cite{dimarco2011exponential} for instance), 
which explains the error variations with $\eps$.

\paragraph{Relaxed conservation law \\} 
\renewcommand{\Dx}{\Delta x}
\renewcommand{\rlxcont}{\tilde{u}}
\renewcommand{\rlxdisc}{\tilde{U}}
\renewcommand{\xvar}{U_1}
\renewcommand{\zvar}{U_2}

\begin{figure}
\centering
\includegraphics[width=\textwidth]{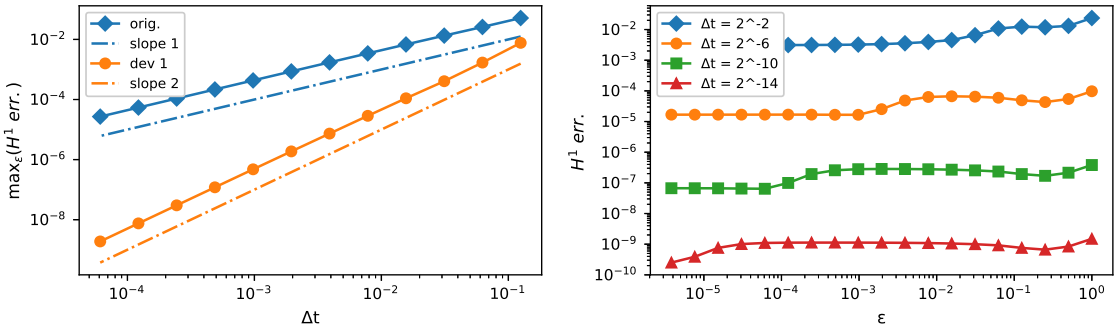}
\vspace*{-24pt}
\caption{Relaxed Burgers-type problem: Maximum modified $H^1$ error (for $\eps$ spanning $1$ to $2^{-18}$ using an ERK3 scheme as a function of $\Dt$ (left), and $H^1$ error as a function of $\eps$ for the micro-macro problem of order~1 (right).}
\label{test_fig:sqr_results}
\end{figure}

Our second test case is a hyperbolic problem for $(t,x) \in [0,T] \times \R/2\pi \mathbb Z$, 
\begin{equation*} 
\left\{ \begin{array}{l}
\dpt u + \dpx \rlxcont = 0 , \\ \displaystyle
\dpt \rlxcont + \dpx u = \frac{1}{\eps}(g(u) - \rlxcont) ,
\end{array} \right .
\end{equation*}
discretized with finite volumes and written in the form of~\eqref{intro-eq:xz_pb} 
by setting $u_1 = u$ and $u_2 = \tilde{u} - g(u)$ 
the $x\eeps$- and the $z\eeps$-component respectively. 
The micro-macro expansion is computed to order~$1$ using the strategy detailed in Subsection~\ref{pde_subsec:conservation}. 

For our tests, following~\cite{hu2019uniform}, we consider 
$ g(u) = bu^2 $ 
with $b = 0.2$. 
Simulations run to a final time $T = 0.25$ and the mesh size is fixed: $N = 16$. 
Initial data is $u(0,x) = \frac{1}{2} e^{\sin(x)}$ and $\rlxcont(0,x) = \cos(x)$. 
The reference solution was computed up to a precision $10^{-12}$ using an ERK2 scheme. 
Convergence results are presented in Figure~\ref{test_fig:sqr_results}, 
confirming theoretical results once more. 

It should be said again that our approach does not study the error in space, only in time. 
For instance, the relationship between the error bound and the grid size is not considered. 
Further studies will be conducted, especially considering CFL conditions, $L^2$ and $H^1$ norms, and computational costs.

\subsection{Near-equilibrium convergence}
\label{sec:tests:subsec:equilibrium}

\begin{figure}
\centering
\includegraphics[width=0.48\textwidth]{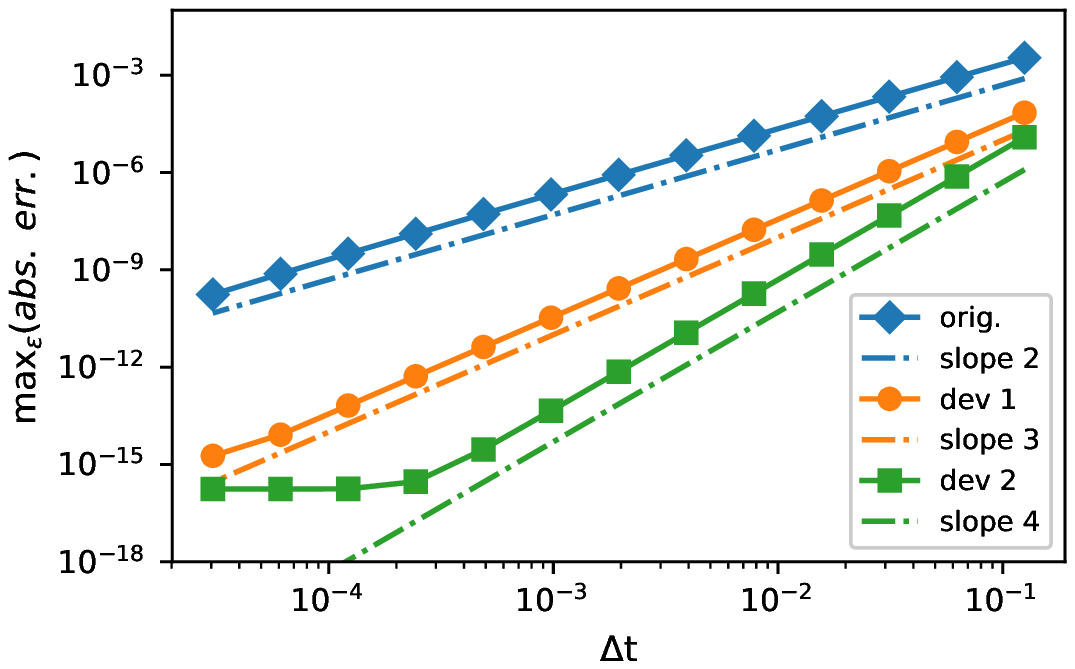}
\includegraphics[width=0.48\textwidth]{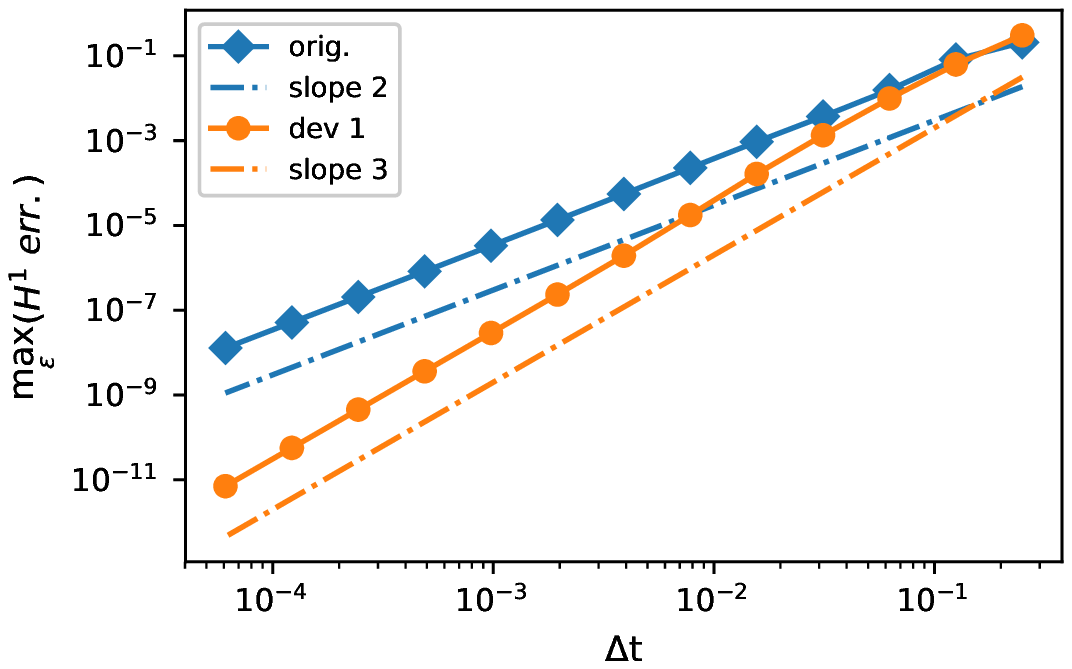}

\vspace*{-10pt}
\includegraphics[width=0.48\textwidth]{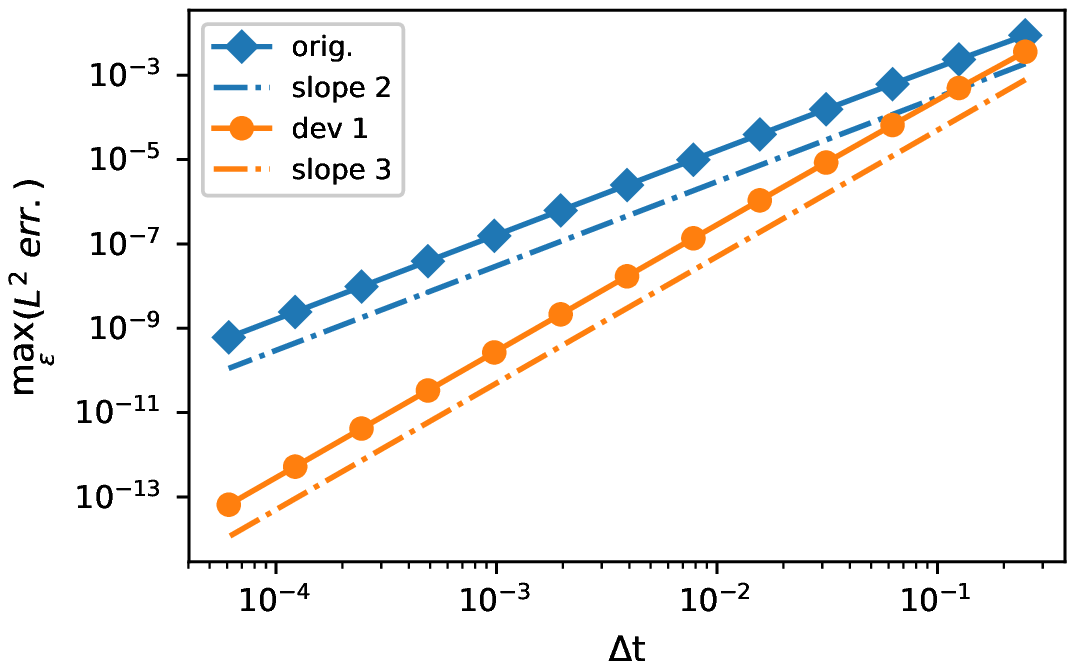}
\vspace*{-10pt}
\caption{In reading order, errors when solving the oscillating toy problem, the telegraph equation and the relaxed conservation law. 
All systems start near equilibrium and are solved with exponential Runge-Kutta schemes of the observed order of convergence. 
}
\label{test_fig:order_gain}
\end{figure}

If one chooses an initial condition $z\eeps(0) = 0$ in~\eqref{intro-eq:xz_pb}, 
then it is close to the center manifold up to $\bigO(\eps)$, 
and Problem~\eqref{intro-pb:full_pb_on_u} can be solved with uniform accuracy of order 2 
but only when considering the absolute error $|\cdot|$, 
not the modified error $\modnorm{\,\cdot\,}$ from~\eqref{mima_def:modnorm}. 
The same behaviour is observed for the telegraph equation when setting $j(0,x) = -\dpx \rho(0,x) $, meaning $z = \bigO(\eps)$. 
This would theoretically mean that we need to push the micro-macro decompositions 
up to order 2 if we want to improve the order of convergence. 
However, this is not the case: 
uniform accuracy of order 3 is obtained from an expansion of order 1 for all test cases. 
This "order gain" also propagates to our micro-macro decomposition of order~2 
for the oscillating toy problem. 
These results can be seen in Figure~\ref{test_fig:order_gain} 
and will be the subject of future works. 


{
\renewcommand{\bibfont}{\small}
\appto{\bibsetup}{\sloppy}
\printbibliography
}

\end{document}